\newtheorem{theorem}{Theorem}[section]
\newtheorem{lemma}[theorem]{Lemma}
\newtheorem{proposition}[theorem]{Proposition}
\newtheorem{definition}[theorem]{Definition}
\newtheorem{claim}[theorem]{Claim}
\newtheorem{setup}[theorem]{Setup}
\numberwithin{equation}{section}
\newcommand{\N}{\mathbb{N}}         % Natural N
\newcommand{\cH}{\mathcal{H}}       % hereditary property  H
\newcommand{\E}{\mathbb{E}}         % expectation E
\newcommand{\K}{\mathcal{K}}        % no graph embeds K
\newcommand{\F}{\mathcal{F}}        % forbidden graphs F
\newcommand{\Z}{\mathbb{Z}}         % integer Z
\newcommand{\eps}{\varepsilon}      % epsilon
\newcommand{\ceil}[1]{\left\lceil{#1}\right\rceil}      % ceiling
\newcommand{\floor}[1]{\left\lfloor{#1}\right\rfloor}   % floor
\newcommand{\size}[1]{\left\lvert{#1}\right\rvert}      % absolute values
\newcommand{\boldx}{\mathbf{x}}                 % bold x
\newcommand{\boldM}{\mathbf{M}}                 % bold x
\newcommand{\lb}{\left(}        % left round bracket
\newcommand{\rb}{\right)}       % right round bracket
\newcommand{\lsb}{\left[}       % left square bracket
\newcommand{\rsb}{\right]}      % right square bracket
\newcommand{\lcb}{\left\{}      % left curly bracket
\newcommand{\rcb}{\right\}}     % right curly bracket
\DeclareMathOperator{\Forb}{Forb}       % make Forb unitalic
\DeclareMathOperator{\dist}{dist}       % make dist unitalic
\DeclareMathOperator{\ed}{ed}           % make dist unitalic
\DeclareMathOperator{\VW}{VW}   % make VW unitalic
\DeclareMathOperator{\VB}{VB}   % make VB unitalic
\DeclareMathOperator{\EW}{EW}   % make EW unitalic
\DeclareMathOperator{\EB}{EB}   % make EB unitalic
\DeclareMathOperator{\EG}{EG}   % make EG unitalic[[[]
\title{Removing induced powers of cycles from a graph via fewest edits}
\author{Amarja Kathapurkar\thanks{University of Birmingham, Birmingham, B15 2TT, UK. Email: \tt{a.kathapurkar@bham.ac.uk.}} , Richard Mycroft\thanks{University of Birmingham, Birmingham, B15 2TT, UK. Email: \tt{r.mycroft@bham.ac.uk.} Research supported by EPSRC Research grant EP/R034389/1.} }
\date{}
\begin{document}

\maketitle

\begin{abstract}
What is the minimum proportion of edges which must be added to or removed from a graph of density $p$ to eliminate all induced cycles of length $h$? The maximum of this quantity over all graphs of density $p$ is measured by the edit distance function, $\text{ed}_{\text{Forb}(C_h)}(p)$, a function which provides a natural metric between graphs and hereditary properties.

Martin determined $\text{ed}_{\text{Forb}(C_h)}(p)$ for all $p \in [0,1]$ when $h \in \{3, \ldots, 9\}$ and determined $\text{ed}_{\text{Forb}(C_{10})}(p)$ for $p \in [1/7, 1]$. Peck determined $\text{ed}_{\text{Forb}(C_h)}(p)$ for all $p \in [0,1]$ for odd cycles, and for $p \in [ 1/\lceil h/3 \rceil, 1]$ for even cycles. In this paper, we fully determine the edit distance function for $C_{10}$ and $C_{12}$. Furthermore, we improve on the result of Peck for even cycles, by determining $\text{ed}_{\text{Forb}(C_h)}(p)$ for all $p \in [p_0, 1/\lceil h/3 \rceil ]$, where $p_0 \leq c/h^2$ for a constant $c$. More generally, if $C_h^t$ is the $t$-th power of the cycle $C_h$, we determine $\text{ed}_{\text{Forb}(C_h^t)}(p)$ for all $p \geq p_0$ in the case when $(t+1) \mid h$, thus improving on earlier work of Berikkyzy, Martin and Peck.
\end{abstract}

\section{Introduction}\label{sec:introduction}

For a fixed constant $p \in (0,1)$, what is the expected number of edges (expressed as a proportion of $\binom{n}{2}$) that must be added or removed from the binomial random graph $G(n, p)$ to eliminate all induced copies of $C_h$, the cycle of length $h$? To answer this question, we might consider the following strategies.
\begin{enumerate}
    \item We could arbitrarily partition the vertex set of $G(n, p)$ into $\lceil h/2 \rceil - 1$ sets, then add all edges which have both endvertices within the same one of these sets. By the Pigeonhole Principle any induced copy of $C_h$ within the resulting graph must have three vertices within the same set, meaning that these vertices form a triangle, giving a contradiction for $h \geq 4$. By convexity, the expected number of edge-additions needed for this is minimised when the sizes of the sets of the partition are made as equal as possible, and this expected minimum number of edits is essentially $\frac{1-p}{\lceil h/2 \rceil - 1} \binom{n}{2}$. 
    \item We could arbitrarily partition the vertex set of $G(n, p)$ into sets $W_1, B_1, \dots, B_{(\lceil h/3 \rceil - 1)}$, then delete all edges with both endvertices in $W_1$ whilst, for each $i$, adding all edges with both endvertices in $B_i$. Again the resulting graph cannot contain an induced copy of $C_h$. Indeed, each set $B_i$ must contain either one or two consecutive vertices of such a copy, and since there are fewer than $h/3$ sets $B_i$ it follows that $W_1$ must also contain two adjacent vertices of the cycle, a contradiction. Solving the appropriate quadratic program shows that the number of edges edited is minimised by taking $W_1$ to have size $(1- p) n$ and the sizes of the sets $B_i$ to be $\frac{p}{\ceil{h/3}-1}n$, and hence that the minimum number of edits is essentially $\frac{p(1-p)}{1 + (\lceil\frac{h}{3}\rceil -2) p}\binom{n}{2}$.
    \item Finally, if $h$ is odd, then we can arbitrarily partition the vertex set of $G(n, p)$ into two sets $A$ and $B$, and delete all edges with both endvertices in the same part. The resulting graph is bipartite and so contains no copies of $C_h$ (induced or otherwise), and the number of edges deleted is essentially~$\frac{p}{2}\binom{n}{2}$.
\end{enumerate}
These three strategies are illustrated in Figure~\ref{fig:Gamma_Forb_C_h}, where white vertices indicate sets in which all edges should be removed, and black vertices represent sets where all non-edges should be added as edges.

From these strategies we obtain an upper bound on the expected number of edge-edits needed to remove all induced copies of $C_h$ from $G(n, p)$ of essentially
\begin{align*}
    \begin{cases}
  \min \lcb \frac{1-p}{\lceil h/2 \rceil - 1} \binom{n}{2}, \frac{p(1-p)}{1 + (\lceil\frac{h}{3}\rceil -2) p}\binom{n}{2} \rcb & \mbox{if $h$ is even, and} \\
  \min \lcb \frac{1-p}{\lceil h/2 \rceil - 1} \binom{n}{2}, \frac{p(1-p)}{1 + (\lceil\frac{h}{3}\rceil -2) p}\binom{n}{2}, \frac{p}{2}\binom{n}{2}\rcb & \mbox{if $h$ is odd}.
\end{cases}
\end{align*}
Martin~\cite{martin2013edit} showed that for all $h \in \{3, \ldots, 9\}$ and $p \in [0,1]$, one of these strategies  is always `best possible', that is, the upper bound obtained above is matched by the lower bound and is in fact the correct answer. Here, the value of $p$ determines which function minimises the terms above, and therefore different strategies will be best possible depending on the value of $p$. Martin also showed that when $h = 10$, these strategies are best possible provided $p \in [1/7,1]$. Peck \cite{peck2013edit} showed that this is best possible for all odd $h \geq 3$ and $p \in [0,1]$. Peck also showed that if $h$ is even, then these strategies are best possible for $p \in [1/\ceil{\frac{h}{3}}, 1]$.

In this paper, we add to this picture, showing that in fact, for $h =10$, these strategies are also best possible for all $p \in [0,1/7]$. We also show that for $h=12$, these strategies are best possible for $p \in [0,1/4]$. Thus, we complete the picture for $h \in \{10, 12\}$. Furthermore, we show that for all even $h \geq 12$, there is a constant $p_0=p_0(h)$ such that for all $p \in [p_0, 1/\ceil{h/3}]$, these strategies are best possible. We remark that the constant $p_0$ is a function of $h$, and is significant because when $h=10$, the function gives exactly the value $1/7$. We give more formal statements of all of the above in \cref{subsec:main_results}. In fact, we prove a more general result where rather than eliminating induced cycles of some fixed length $h$, we are eliminating induced copies of the $t$-th power of this cycle. As we will see more formally in \cref{thm:edit_estimated_by_random_graph}, a result of Balogh and Martin shows that these results are more significant, because they apply to all graphs with density $p$, rather than just the random graph. We will formulate all these in terms of a quantity known as the \emph{edit distance function}.

The \emph{edit distance} is a natural metric between two graphs which counts the proportion of changes which must be made to the edge set of one in order to obtain the other. Essentially, it is a measure of how similar two graphs are. As a concept, this was first formalised by Sanfeliu and Fu \cite{SanfeliuEdit} in 1983, as a tool for pattern recognition. Here, one proposed use was as a method of using a computer to recognise lower case letters drawn by hand, and computer science has since seen many uses of this. The particular formulation we are interested in here concerns the furthest graph from some hereditary property of graphs, and was introduced independently by Alon and Stav~\cite{alon2008furthest} and Axenovich, K\'ezdy and Martin~\cite{axenovich2008editing}. 

Formally, we define the edit distance between two graphs $G$ and $G'$ on the same vertex set to be the size of the symmetric difference between their edge sets as a fraction of the total number of possible edges, that is, if $\size{V(G)}= \size{V(G')} = n$, then
\begin{equation*}
    \dist(G,G') = \frac{\size{E(G) \Delta E(G')}}{\binom{n}{2}}.
\end{equation*}
We say $\cH$ is a \emph{hereditary property of graphs} if $\cH$ is a class of graphs which is closed under taking isomorphisms and induced subgraphs. $\Forb(H)$ represents the class of all graphs $G$ which do not have $H$ as an induced subgraph. Hereditary properties can be classified in terms of their forbidden subgraphs, that is, for any hereditary property $\cH$, there is a family $\F(\cH)$ of forbidden graphs, that is,
\begin{equation*}
    \cH = \bigcap_{H \in \F(\cH)} \Forb(H).
\end{equation*}
We say a hereditary property $\cH$ is \emph{trivial} if there is an $n_0 \in \N$ such that for all $n \geq n_0$, there is no $n$-vertex graph contained in $\cH$. In other words, a class is trivial if and only if it is finite. Otherwise, we say $\cH$ is \emph{non-trivial}. 
%For instance, an example of a non-trivial hereditary property is $\Forb(C_h)$, the class of graphs with no $C_h$ as an induced subgraph.

We can extend the notion of distance between graphs to define the distance between a graph $G$ and a hereditary property $\cH$, which we define to be the minimum distance from $G$ to some graph $G'$ in $\cH$ on the same vertex set, that is,
\begin{equation*}
    \dist(G, \cH) = \min \lcb \dist(G, G') \colon G' \in \cH, \size{V(G)}= \size{V(G')} \rcb .
\end{equation*}
Problems in the area have focused on finding the maximum distance of a graph $G$ on $n$ vertices from a hereditary property $\cH$, and it was this which led to the conception of the \emph{edit distance function} by Balogh and Martin \cite{balogh2008edit}. For any $p \in [0,1]$, we define
\begin{equation}\label{eq:edit_distance_function}
    \ed_\cH(p) = \lim_{n \to \infty} \max \lcb \dist(G, \cH) \colon \size{V(G)} = n, \size{E(G)} = \floor{p \binom{n}{2}} \rcb,
\end{equation}
if this limit exists. So in other words, for any $p$, the edit distance function for a hereditary property $\cH$ tells us the furthest distance a graph of density $p$ can be from belonging to $\cH$. Balogh and Martin \cite{balogh2008edit} later generalised a result of Alon and Stav \cite{alon2008furthest} to show that the limit in \eqref{eq:edit_distance_function} does exist for all non-trivial hereditary properties $\cH$. In addition to this, they showed the following result.

% Since the quantity $\floor{p \binom{n}{2}}$ approximates the number of edges of the random graph $G(n,p)$, the definition of the edit distance function suggests an inherent connection to random graph $G(n,p)$, and in fact the following result was established by Balogh and Martin \cite{balogh2008edit}.
%
\begin{theorem}[Balogh-Martin \cite{balogh2008edit}]\label{thm:edit_estimated_by_random_graph}
\begin{equation*}
    \ed_\cH(p) = \lim_{n \to \infty} \E \lsb \dist(G(n,p), \cH) \rsb .
\end{equation*}
\end{theorem}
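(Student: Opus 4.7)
The statement is an equality of two limits, which I would prove as a pair of asymptotic inequalities. The upper bound $\limsup_n \E[\dist(G(n,p),\cH)] \leq \ed_\cH(p)$ is the easier direction, proved by concentration. A single edge edit changes $\dist(\cdot,\cH)$ by at most $1/\binom{n}{2}$, so the quantity $e(q,n) := \max\{\dist(G,\cH) : |V(G)|=n, |E(G)|=\lfloor q\binom{n}{2}\rfloor\}$ satisfies $|e(q,n)-e(p,n)| \leq |q-p|$: from an extremal graph of density $q$ one can reach density $p$ by $|q-p|\binom{n}{2}$ edits, losing at most $|q-p|$ in $\dist(\cdot,\cH)$. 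Chernoff's inequality gives that the density $\hat p$ of $G(n,p)$ satisfies $|\hat p - p| = O(n^{-1/2}\log n)$ with probability $1-o(1)$, so $\dist(G(n,p),\cH) \leq e(\hat p,n) \leq e(p,n)+o(1)$ on this event; the atypical event contributes $o(1)$ to the expectation since $\dist(\cdot,\cH) \leq 1$, and letting $n \to \infty$ yields the bound.

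The reverse inequality $\ed_\cH(p) \leq \liminf_n \E[\dist(G(n,p),\cH)]$ asserts that a uniformly random graph is, in expectation, essentially as far from $\cH$ as the worst-case graph of density $p$, which is the substantive direction. My plan is to identify both quantities with a common value $m(p) := \min_K f_K(p)$, where $K$ ranges over \emph{colored regularity graphs} (CRGs) avoiding $\cH$ and $f_K$ is the associated quadratic density function in the spirit of Alon--Stav. The identification $\ed_\cH(p) = m(p)$ follows by applying Szemer\'edi's Regularity Lemma to an extremal graph $F$ of density $p$ with $\dist(F,\cH)$ near $\ed_\cH(p)$: the reduced graph of $F$ yields a CRG $K$ avoiding $\cH$, and a counting/removal argument shows $\dist(F,\cH) \leq f_K(p)+o(1)$, while conversely any CRG avoiding $\cH$ exhibits an explicit editing strategy of cost $f_K(p)+o(1)$, giving the matching upper bound. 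The bound $\liminf_n \E[\dist(G(n,p),\cH)] \geq m(p)$ is obtained by fixing, for each outcome, an optimal $H \in \cH$ closest to $G(n,p)$, applying the Regularity Lemma to $H$, and arguing that the total editing cost is at least $f_K(p)-o(1)$ in expectation by using density concentration on $G(n,p)$ to force the block densities of a typical $G(n,p)$ to be uniformly close to $p$ within the regular partition of $H$.

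The main obstacle is controlling the errors in these regularity arguments tightly enough for the lower bound: one needs the density concentration on $G(n,p)$ to dominate the contributions from irregular pairs, from the exceptional set in the partition, and from the summation over the (exponentially many) candidate CRGs. Setting up the CRG framework so that the optimiser $K$ is robust under perturbations, and verifying that every $H \in \cH$ sufficiently close to a typical $G(n,p)$ does correspond to such a $K$ with edit cost bounded below by $f_K(p)$, is the technical heart of the argument.
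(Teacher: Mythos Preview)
The paper does not contain a proof of this theorem: it is stated as a background result attributed to Balogh and Martin~\cite{balogh2008edit} and is used without proof, so there is no ``paper's own proof'' to compare your proposal against.

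That said, your outline is broadly in the spirit of the original Alon--Stav and Balogh--Martin arguments: the easy direction via density concentration of $G(n,p)$, and the hard direction via the Regularity Lemma and the CRG framework. Two comments on your sketch. First, your plan for the lower bound is somewhat circular as written: you propose to establish $\ed_\cH(p) = m(p)$ with $m(p) = \min_K f_K(p)$, but in the present paper that identity appears as Theorems~\ref{thm:alon_stav_edit_distance_equals_inf} and~\ref{thm:marchant_thomason_edit_distance_equals_min}, which are themselves consequences of (or developed alongside) the Balogh--Martin machinery you are trying to prove; you would need to develop the CRG upper and lower bounds from scratch rather than invoke them. Second, your final paragraph correctly identifies the genuine technical work, but the phrase ``summation over the (exponentially many) candidate CRGs'' is a red herring: for fixed $\varepsilon$ the number of relevant CRGs on a bounded number of parts is bounded independently of $n$, so no union bound over exponentially many objects is needed. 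The real issue is the quantitative interplay between the regularity parameters and the edit-distance error, which you have flagged.
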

That is, asymptotically, for any $p$ and hereditary property $\cH$, we can use the random graph $G(n,p)$ to estimate the edit distance function. Hence, the strategies outlined earlier for the random graph also give upper bounds on the edit distance function from $\Forb(C_h)$. Balogh and Martin \cite{balogh2008edit} also showed that the edit distance function is continuous and concave down. Methods to determine the edit distance function $\ed_\cH(p)$ make implicit use of these properties, as well as \cref{thm:edit_estimated_by_random_graph}, as we will see in the following sections. 

The edit distance function has been studied for a range of hereditary properties of the form $\cH = \Forb(H)$, such as for $H= K_r$ (see \cite{martin2013edit}), $H=K_{s,t}$ (see for example \cite{Martin_McKay_bipartite}) and more recently when $H=G(n',p')$, for some $n'$ and $p'$ which are fixed with respect to $n$ and $p$ (see \cite{martin_riasanovsky_2022}). Axenovich and Martin \cite{Axenovich_Martin_multicolor} also extended this theory into edge-coloured graphs and directed graphs, and an interesting open question raised by Martin \cite{martin2016edit} is whether this notion of the edit distance function can be extended into the setting of hypergraphs.

\subsection{Main results}\label{subsec:main_results}

In this paper, we study  the edit distance function when $\cH = \Forb(C_h^t)$ for some $h,t \in \N$. 
Here, $C_h$ is a cycle on $h$ vertices and $C_h^t$ is defined to be the graph on the same vertex set as $C_h$ such that there is an edge between two vertices of $C_h^t$ if and only if these vertices were at distance at most $t$ in $C_h$. In particular, when $t=1$, this is just the cycle on $h$ vertices. Thus, we aim to determine $\ed_{\Forb(C_h^t)}(p)$, where $\Forb(C_h^t)$ is the class of graphs which contain no $C_h^t$ as an induced subgraph. As a very natural property to consider, this question has received a lot of interest in the past. Marchant and Thomason \cite{marchant2010extremal} determined $\ed_{\Forb(C_h)}(p)$ for all $p \in [0,1]$ when $h=4$. Martin \cite{martin2013edit} explicitly determined $\text{ed}_{\text{Forb}(C_h)}(p)$ for $h \in \{5, \dots, 9\}$. In the same work, Martin determined $\ed_{\Forb(C_{10})}(p)$ for $p \in [1/7, 1]$. Peck \cite{peck2013edit} later determined $\ed_{\Forb(C_{h})}(p)$ for all $p \in [0,1]$ when $h \geq 3$ and $h$ is odd, and for $p \in [1/\ceil{h/3},1]$ when $h \geq 4$ and $h$ is even. Berikkyzy, Martin and Peck \cite{berikkyzy2019edit} generalised this result to determine $\ed_{\Forb(C_h^t)}(p)$ for all $p \in [0,1]$ when $h \geq 2t(2t+1)+1$ and $(t+1) \nmid h$. The same authors also determined $\ed_{\Forb(C_h^t)}(p)$ for $p \in [1/\ceil{h/(2t+1)},1]$ when $h \geq 2t(2t+1)+1$ and $(t+1) \mid h$. More precisely, they showed the following.

\begin{theorem}[Berikkyzy, Martin and Peck \cite{berikkyzy2019edit}]\label{pthm:berikkyzy_powers_of_cycles}
Let $t \geq 1$ and $h \geq 2t(t+1)+1$ be positive integers, and let $\cH = \emph{\text{Forb}}(C_h^t)$.
\begin{enumerate}
    \item If $(t+1) \not|\,\, h$, then for all $p \in [0,1]$, we have
    \[
        \text{\emph{ed}}_{\text{\emph{Forb}}(C_h^t)}(p) = \min\lcb \frac{p}{t+1}, \min_{r \in \{0, 1, \dots, t\}} \lcb  \frac{p(1-p)}{r+\lb \ceil{\frac{h}{2t+1}} -r-1 \rb p} \rcb \rcb.
    \]
    \item If $(t+1) \mid h$, then for all $p \in [1/\ceil{h/(2t+1)},1]$, we have
    \[
        \text{\emph{ed}}_{\text{\emph{Forb}}(C_h^t)}(p) = \min_{r \in \{0, 1, \dots, t\}} \lcb \frac{p(1-p)}{r+\lb \ceil{\frac{h}{2t+1}} -r-1 \rb p} \rcb.
    \]
\end{enumerate}
\end{theorem}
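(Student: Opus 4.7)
The plan is to establish matching upper and lower bounds on $\ed_{\Forb(C_h^t)}(p)$ using the framework of coloured regularity graphs (CRGs) developed by Marchant--Thomason and Alon--Stav. This framework reduces the computation of the edit distance function to an infimum of quadratic program values $g_K(p)$ over CRGs $K$ into which $C_h^t$ cannot be mapped in a colour-respecting way, so the task splits into producing explicit witnesses (upper bound) and classifying all admissible CRGs (lower bound).

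For the upper bound, I would exhibit one CRG per claimed term. For the term $\frac{p(1-p)}{r + (\ceil{h/(2t+1)} - r - 1)p}$, take the CRG $K_r$ on $\ceil{h/(2t+1)} - 1$ vertices with $r$ coloured white, the rest coloured black, and every edge coloured grey. A direct combinatorial check --- using that in a grey-edge CRG a black vertex can absorb only a clique of $C_h^t$ (of size at most $\omega(C_h^t) = t+1$), a white vertex can absorb only an independent set (of size at most $\alpha(C_h^t) = \lfloor h/(t+1) \rfloor$), and preimages along the cycle must respect consecutivity --- rules out any colour-respecting map of $C_h^t$ into $K_r$. Solving the resulting quadratic program by Lagrange multipliers then yields the claimed expression, corresponding to the natural generalisation of strategy~(2) from the introduction. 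For part~(i), the extra term $p/(t+1)$ comes from a CRG with $t+1$ all-white vertices and grey edges, valid precisely because $\chi(C_h^t) > t+1$ when $(t+1) \nmid h$.

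For the lower bound, the core task is to show that every CRG $K$ avoiding $C_h^t$ satisfies $g_K(p) \geq$ the claimed minimum. The standard technique is to reduce to \emph{minimal} CRGs, i.e.\ those in which deleting any vertex strictly increases $g_K$, and then classify them. For each such $K$ one computes or lower-bounds the quadratic program value and checks that it matches one of the $K_r$ or the all-white CRG. The classification itself proceeds by constructing an explicit colour-respecting map of $C_h^t$ into $K$ whenever $K$ has ``excess" structure: if $K$ contains a grey edge, a black--white pair, or a sufficiently long monochromatic path, then one winds a cyclic walk of length $h$ through the offending configuration to realise $C_h^t$, yielding a contradiction.

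The main obstacle is precisely this structural classification, which is genuinely sensitive to whether $(t+1) \mid h$. When $(t+1) \nmid h$, the inequality $\chi(C_h^t) > t+1$ both activates the $p/(t+1)$ term and rules out certain all-white CRGs from the admissible family, making the classification clean and valid for all $p \in [0,1]$. When $(t+1) \mid h$, additional all-white CRGs with up to $\ceil{h/(2t+1)}$ vertices become admissible and their quadratic programs produce smaller values than the claimed minimum once $p < 1/\ceil{h/(2t+1)}$, which is exactly why the statement restricts to $p \geq 1/\ceil{h/(2t+1)}$ in case~(ii). Handling the exact arithmetic of when a cyclic walk of length $h$ can realise the adjacency pattern of $C_h^t$ inside a candidate CRG --- particularly one containing grey edges, which behave as wildcards --- is where the bulk of the technical work lies, and where the hypothesis $h \geq 2t(t+1)+1$ is used to ensure enough ``room" for such windings.
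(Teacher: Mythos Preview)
This theorem is not proved in the present paper; it is quoted verbatim from Berikkyzy, Martin and Peck~\cite{berikkyzy2019edit} and used as a black box. So there is no proof in the paper to compare your proposal against. That said, your outline deserves a comment on its own merits, because it contains a concrete misdiagnosis of where the difficulty lies in case~(ii).

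You write that when $(t+1)\mid h$, ``additional all-white CRGs with up to $\ceil{h/(2t+1)}$ vertices become admissible and their quadratic programs produce smaller values than the claimed minimum once $p<1/\ceil{h/(2t+1)}$.'' This is not correct. When $(t+1)\mid h$ we have $\chi(C_h^t)=t+1$, so $C_h^t$ embeds in $K(t+1,0)$; the admissible all-white grey-edge CRGs therefore have at most $t$ vertices, giving $g_{K(t,0)}(p)=p/t$, and a direct comparison shows $p/t\geq \gamma_{\cH}(p)$ for all $p$. The genuine obstruction, as revealed by \cref{lem:forbidden_cycles} and \cref{lem:white_vxs} in this paper, is of a completely different shape: a potential candidate $p$-core CRG has exactly $t-1$ white vertices together with an arbitrarily large black part $K_B$ whose edges are a mixture of grey and white, subject only to the constraint that $K_B$ contains no grey cycle with length in $\{\ceil{h/2t},\dots,\lfloor h/t\rfloor\}$. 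Ruling out such CRGs --- or, for small $p$, showing they can in fact beat $\gamma_{\cH}(p)$ --- is a weighted extremal problem about cycle lengths in $K_B$, not a chromatic-number calculation, and it is exactly this problem that the present paper attacks for $p\in[p_0,1/\ceil{h/(2t+1)}]$. Your proposed ``wind a cyclic walk through the offending configuration'' heuristic does not engage with this structure, and in particular gives no mechanism for controlling the white edges inside $K_B$.
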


We extend on this result for small $p$ in the case when $(t+1) \mid h$ to show the following.

\begin{theorem}\label{thm:main_result}
Let $t \geq 1$ and $h \geq 4t(2t+1)$ be integers, with $(t+1) \mid h$. Let $c_0  = \floor{(\floor{h/t}+1)/3}$, $\ell_0 = \ceil{h/(2t+1)}$, and let $p_0 = t/(c_0 \ell_0 -c_0 - \ell_0 +t+1)$. Then for all $p \in [p_0, 1/\ceil{h/(2t+1)}]$, we have that 
\[
\ed_{\Forb(C_h^t)}(p) = \frac{p(1-p)}{t+\lb \ceil{\frac{h}{2t+1}} -t-1 \rb p}.
\]
\end{theorem}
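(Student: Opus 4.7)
The plan is to use the coloured regularity graph (CRG) framework of Alon--Stav and Marchant--Thomason, which reduces the problem to a quadratic optimisation over CRGs. Recall that a CRG $K$ is a complete graph on $V(K)$ whose vertices are coloured black or white and whose edges are coloured black, white or gray, carrying a function $g_K(p) = \min_{\boldx} \boldx^T M_K(p) \boldx$, the minimum taken over probability distributions on $V(K)$, where $M_K(p)$ records the edit cost of each colour. A standard theorem then expresses $\ed_{\Forb(H)}(p) = \min\{g_K(p) : K \in \K(\Forb(H))\}$, where $\K(\Forb(H))$ is the class of CRGs into which $H$ admits no colour-respecting ``pattern'' embedding. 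For the upper bound in \cref{thm:main_result}, I would consider the CRG $K^{*}$ with $t$ white vertices, $\ell_0 - t$ black vertices, and every edge coloured gray: the divisibility hypothesis $(t+1)\mid h$ together with the pigeonhole argument sketched in the introduction (straightforwardly generalised from cycles to $t$-th powers) shows $K^{*} \in \K(\Forb(C_h^t))$, and a direct KKT calculation, analogous to the one that produces $p(1-p)/(1 + (\lceil h/3 \rceil - 2)p)$ in the introduction, yields $g_{K^{*}}(p) = p(1-p)/(t + (\ell_0 - t - 1)p)$.

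For the lower bound I would argue by contradiction. Fix $p \in [p_0, 1/\ell_0]$ and suppose that some $K \in \K(\Forb(C_h^t))$ satisfies $g_K(p) < f(p) := p(1-p)/(t + (\ell_0 - t - 1)p)$; choose such a $K$ with $|V(K)|$ minimal. Applying the standard CRG reductions used by Martin, Peck, and Berikkyzy--Martin--Peck (removing dominated vertices, deleting removable non-gray edges, and exploiting the Balogh--Martin concavity) one may assume that the optimal weight vector $\boldx^{*}$ is fully supported on $V(K)$ and that the KKT stationarity conditions hold tightly at every non-gray edge. From these equations I can then derive explicit bounds on the numbers $w$ and $b$ of white and black vertices of $K$ and on the fractional mass the optimum places on non-gray edges, in the spirit of the analysis that established the formula in the range $p \in [1/\ell_0, 1]$.

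The main obstacle, and where both the hypothesis $h \geq 4t(2t+1)$ and the specific threshold $p_0 = t/(c_0 \ell_0 - c_0 - \ell_0 + t + 1)$ enter, is a combinatorial lemma contradicting $K \in \K(\Forb(C_h^t))$. The natural approach is a pigeonhole walk construction: since $h$ is large compared with $|V(K)|$, one tries to build a cyclic sequence of $h$ symbols from $V(K)$ such that pairs at cyclic distance $\leq t$ are joined only by black loops/edges or gray edges, whilst pairs at cyclic distance $>t$ use only white loops/edges or gray edges; such a sequence exhibits $C_h^t$ as a pattern in $K$. The divisibility $(t+1) \mid h$ allows us to tile the $h$-cycle into blocks of length $t+1$, each of which is a maximum clique in $C_h^t$ and hence can be collapsed onto a single black vertex of $K$, while the threshold $p_0$ emerges as the precise point at which the KKT inequalities force enough black-vertex weight in $K$ that a valid walk must exist; the role of $c_0 = \lfloor(\lfloor h/t\rfloor + 1)/3\rfloor$ is to quantify the maximum ``sparsity'' of a $K$ that could evade such a walk. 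Combining this structural lemma with the quantitative KKT bounds to contradict $g_K(p) < f(p)$ for $p \geq p_0$ will, as in Peck's treatment of the complementary range $p \geq 1/\ell_0$, be the bulk of the work.
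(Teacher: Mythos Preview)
Your framework is right --- the upper bound comes from a grey-edge CRG (though note: it should have $\ell_0-1$ black vertices, not $\ell_0-t$, to give the claimed $g_{K^*}(p)$), and the lower bound proceeds by assuming a $p$-core candidate $K$ with $g_K(p)<f(p)$ and deriving a contradiction. But your description of the main combinatorial step is off-target in a way that matters.

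The paper does not attempt a direct ``pigeonhole walk'' embedding of $C_h^t$ into $K$. Instead it first pins down the structure of $K$: a candidate CRG must have exactly $t-1$ white vertices, and then (by a prior result of Berikkyzy--Martin--Peck) the grey-edge subgraph $K_B$ on the black vertices cannot contain any grey cycle with length in $\{\lceil h/2t\rceil,\dots,\lfloor h/t\rfloor\}$, since such a cycle would yield an embedding of $C_h^t$. The contradiction is obtained by forcing $K_B$ to contain exactly such a cycle, in two stages. First, a longest-path rotation argument, driven by the KKT weighted-degree bounds, shows $K_B$ has a grey cycle of length at least $\lceil h/2t\rceil$, and hence (the interval being forbidden) of length at least $\lfloor h/t\rfloor+1$. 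Second, a cycle-shortening lemma: if in every set of $\lfloor m/3\rfloor$ vertices some two share a common neighbour, then any cycle of length $\geq m$ yields one of length in $[\lceil m/2\rceil,m-1]$. Applying this with $m=\lfloor h/t\rfloor+1$ lands back in the forbidden interval.

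This is exactly where $c_0$ and $p_0$ enter, and your interpretation of them is wrong. One has $c_0=\lfloor m/3\rfloor$ for $m=\lfloor h/t\rfloor+1$, and the common-neighbour hypothesis is guaranteed once every black vertex satisfies $d_G^B(v)>1/c_0$; solving the KKT formula for $d_G^B(v)$, this inequality is equivalent to $p\geq p_0$. So $p_0$ is not a ``sparsity threshold for walks'' but the precise threshold at which the weighted degrees in $K_B$ become large enough to power the cycle-shortening lemma. Without this grey-cycle mechanism I do not see how your vague step ``KKT bounds force enough black-vertex weight that a valid walk must exist'' could be made to work.
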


Note that when $h=10$ and $t = 1$, the value of $p_0$ in the theorem above is exactly $1/7$, and so the value of $p_0$ above result matches the previously known result of Martin \cite{martin2013edit} for $h = 10$ and $t = 1$. In fact, for $t=1$, when $h \in \{10, 12\}$, we determine the edit distance function for all $p \in [0,p_0]$, as follows.

\begin{theorem}\label{thm:main_result_C_10_12}
    \begin{enumerate}
        \item\label{item:main_result_h_10} For $p \in (0,1/7)$, we have $\ed_{\Forb(C_{10})}(p) = \frac{p(1-p)}{1+2p}$.
        \item\label{item:main_result_h_12} For $p \in (0, 1/10)$, we have $\ed_{\Forb(C_{12})}(p) = \frac{p(1-p)}{1+2p}$.
    \end{enumerate}
\end{theorem}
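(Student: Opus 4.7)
The plan is to apply the colored regularity graph (CRG) framework of Marchant--Thomason and Balogh--Martin: one has $\ed_{\Forb(C_h)}(p) = \inf_K f(K,p)$, where the infimum is over $C_h$-good CRGs $K$ and $f(K,p)$ is the value of the associated quadratic program. The upper bound is witnessed by the 4-vertex CRG $K^*$ with one white vertex, three black vertices, and all cross-edges grey: its optimal weights are $(1-p)/(1+2p)$ on the white vertex and $p/(1+2p)$ on each black vertex, giving $f(K^*,p) = p(1-p)/(1+2p)$. One verifies that $K^*$ is $C_h$-good for $h \in \{10,12\}$ by a direct combinatorial check: $V(C_h)$ cannot be partitioned into a single independent set together with three $C_h$-cliques of size at most $2$, since removing any bipartition class (the only independent sets of size $h/2$) leaves another independent set, while removing a smaller independent set leaves too few disjoint edges.

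For the matching lower bound, Theorem~\ref{thm:main_result} already establishes $f(K,p) \geq p(1-p)/(1+2p)$ for every $C_h$-good CRG $K$ in the range $p \in [p_0, 1/\ceil{h/3}]$, with $p_0 = 1/7$ for $h=10$ and $p_0 = 1/10$ for $h=12$. The new content is extending this to $p \in (0, p_0)$, a regime in which CRGs with several white vertices become serious candidates (their diagonal contribution $p x_w^2$ to $f$ vanishes with $p$), so finer structural control is needed. Note that concavity of $\ed_{\Forb(C_h)}$ combined with Theorem~\ref{thm:main_result} does not suffice, since the upper bound $p(1-p)/(1+2p)$ is itself concave and passes through the origin.

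My strategy is to fix a minimising $C_h$-good CRG $K$ and case-analyse its structure. Standard merging reductions let us assume $K$ has no two same-coloured vertices joined by a same-colour edge. The bipartite structure of the even cycle $C_h$ then yields a direct embedding whenever $K$ has two white vertices joined by a grey edge (place the two bipartition classes on them), so this configuration is forbidden. Analogous embedding arguments---via covers of $V(C_h)$ by one independent set, some disjoint edges, and some singletons---bound the number of black vertices and restrict the allowed edge-colourings among them. For each of the finitely many surviving CRG structures, one either exhibits an explicit $C_h$-embedding (contradicting $C_h$-goodness) or solves the resulting quadratic program in closed form and verifies $f(K,p) \geq p(1-p)/(1+2p)$ for $p \in (0,p_0)$.

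The main obstacle will be handling CRGs with black or white edges between white and black vertices, or black edges between two white vertices: such edges do not produce an immediate embedding of $C_h$, but they restrict the admissible covers of $V(C_h)$ in nontrivial ways (for instance, two whites joined by a black edge can each only absorb an independent set of $C_h$ of size at most $2$). The associated constrained quadratic programs must be solved and shown to exceed $p(1-p)/(1+2p)$ uniformly on $(0, p_0)$. That $\ceil{h/3} = 4$ for both $h = 10$ and $h = 12$ keeps the enumeration finite, but the arithmetic verification near $p \to 0$ is where the bulk of the technical work lies.
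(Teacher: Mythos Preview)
Your proposal has a genuine gap that makes the approach unworkable.

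First, your concern that ``CRGs with several white vertices become serious candidates'' for small $p$ is misplaced. Lemma~\ref{lem:white_vxs} (from Berikkyzy--Martin--Peck) applies for \emph{all} $p\in(0,1/2]$ and says that any $p$-core CRG with $g_K(p)<\gamma_\cH(p)$ has exactly $t-1$ white vertices; for $t=1$ this is zero. Combined with Theorem~\ref{thm:characterisation_of_p_core_crgs}, a $p$-core candidate CRG for $\Forb(C_h)$ therefore has \emph{all vertices black} and \emph{all edges grey or white}, with no black edges at all. So your case analysis of white--white and white--black edge colours never arises.

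The real obstacle, and the fatal gap in your plan, is that there is \emph{no finite enumeration} to perform. A $p$-core candidate CRG is simply a weighted graph $K_B$ on an \emph{unbounded} number of black vertices, with grey edges forming a graph that avoids cycles of any length in $\{\lceil h/2\rceil,\dots,h\}$ (Lemma~\ref{lem:forbidden_cycles}) but satisfies the weighted degree condition of Proposition~\ref{pprop:black_vertex_properties}. As $p\to 0$ the weight upper bound $\boldx(v)\le p/(1+(\lceil h/3\rceil-2)p)$ forces $|\VB|\to\infty$, so ``finitely many surviving CRG structures'' does not hold, and solving a finite list of quadratic programs cannot close the argument.

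The paper proceeds instead by an infinitary weight argument inside $K_B$: Lemma~\ref{plem:long_cycle_exists} produces a grey cycle of length $\ge h/2$, hence of length $\ge h+1$ by the forbidden range. Two antipodal vertices $u,u'$ on this cycle must have disjoint grey second neighbourhoods (else a shorter long cycle arises), so $\boldx(N_G^2(u))\le 1/2$ for one of them. A multiplicity-controlled double count of $\sum_{v\in N_G(u)}\sum_{w\in N_G(v)}\boldx(w)$, using the degree and weight bounds of Proposition~\ref{pprop:black_vertex_properties} and the forbidden short cycles to cap multiplicities, then forces $\boldx(N_G^2(u))>1/2$, a contradiction. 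This is the missing idea your finite enumeration would have to be replaced by.
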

As a consequence of \cref{thm:main_result} and \cref{thm:main_result_C_10_12} together with the work of Martin~\cite{martin2013edit} and Peck~\cite{peck2013edit}, we now know the value of $\ed_{\Forb(C_{10})}(p)$ and $\ed_{\Forb(C_{12})}(p)$ for all $p \in [0,1]$. The remainder of the paper is organised as follows. In \cref{sec:CRG}, we introduce \emph{coloured regularity graphs}, which are the key tool we use in our proof. In \cref{sec:proof}, we prove some lemmas which we then combine to prove \cref{thm:main_result}, and \cref{thm:main_result_C_10_12}. Finally, in \cref{sec:conclusion}, we discuss exactly what needs to be done to determine the edit distance function $\ed_{\Forb(C_h^t)}(p)$ for all $p \in [0,p_0]$.

\section{Coloured regularity graphs}\label{sec:CRG}

In this section we describe the concept of a coloured regularity graph, which was first introduced by Marchant and Thomason~\cite{marchant2010extremal}. The idea of this structure is that it encodes a set of rules for how to edit a graph to make it satisfy some hereditary property.

A \emph{coloured regularity graph (CRG)} is a complete graph $K$ on $k$ vertices, in which each vertex is coloured either black or white, and each edge is coloured black, white or grey. Formally, we say that the vertex set $V(K)$ is partitioned into two (possible empty) sets $\text{VB}(K)$ and $\text{VW}(K)$, of black and white vertices respectively, and that the edge set $E(K)$ is partitioned into three (possibly empty) sets $\text{EB}(K)$, $\text{EW}(K)$, and $\text{EG}(K)$ of black, white, and grey edges respectively. For ease of notation, when the CRG $K$ is clear from the context, we write $\text{VB}$ to mean $\text{VB}(K)$, and analogously write $\text{VW}$,~$\text{EB}$,~$\text{EW}$,~and~$\text{EG}$. We say a CRG $K'$ is a \emph{sub-CRG} of a CRG $K$ if $K'$ can be obtained by deleting vertices of $K$.

We say a graph $H$ \emph{embeds} in a CRG $K$, and we write $H \mapsto K$, if there exists some function $\phi \colon V(H) \longrightarrow V(K)$ which, for all $u, v \in V(K)$, satisfies the following conditions.
\begin{enumerate}[label=(\roman*)]
    \item If $uv \in E(H)$, we have either $\phi(u) = \phi(v) \in \text{VB}$, or $\phi(u)\phi(v) \in \text{EB} \cup \text{EG}$.
    \item If $uv \not\in E(H)$, we have either $\phi(u) = \phi(v) \in \text{VW}$, or $\phi(u)\phi(v) \in \text{EW} \cup \text{EG}$.
\end{enumerate}
For our purposes, due to a key property of CRGs, it is most interesting to consider all those CRGs into which $H$ does not embed. Specifically, suppose we find a CRG $K$ into which a graph $H$ does not embed. Then any graph $H'$ which contains $H$ as an induced subgraph will also not embed in $K$. So, if there is a graph $G$ which embeds into $K$, then crucially, $G$ cannot contain $H$ as an induced subgraph, implying that $G \in \text{Forb}(H)$. Thus we find a very clear relationship between the class of CRGs into which a graph $H$ does not embed, and the family $\text{Forb}(H)$.

Recall that $\F(\cH)$ is the family of all forbidden subgraphs of $\cH$. Then for any hereditary property $\cH$, we let
\begin{equation*}
    \K(\cH) = \lcb K \colon H \not\mapsto K  \text{ for all } H \in \F(\cH) \rcb .
\end{equation*}
Let $K \in \K(\cH)$ for some $\cH$, and suppose $G$ does not belong to $\cH$. We can view $K$ as a set of rules by which to edit $G$, in order for it to belong to $\cH$. We begin by partitioning $V(G)$ into $k$ parts $V_1, \dots, V_k$ such that each part $V_i$ corresponds to a distinct vertex $v_i$ of $K$. The optimal sizes of these parts are yet to be determined, but do not affect the method of editing. Indeed, we add or remove edges according the following rules.
 \begin{enumerate}[label=(\roman*),parsep=0pt]
    \item If $v_i \in \text{VB}$, we add all edges with both endpoints in $V_i$.
    \item If $v_i \in \text{VW}$, we remove all edges with both endpoints in $V_i$.
    \item If $v_iv_j \in \text{EB}$, we add all edges with one endpoint in $V_i$ and the other in $V_j$.
    \item If $v_iv_j \in \text{EW}$, we remove all edges with one endpoint in $V_i$ and the other in $V_j$.
\end{enumerate}
Let $G'$ be the graph obtained from $G$ by carrying out these edits. Then $G'$ embeds into $K$, and so by our observations above we have $G' \in \cH$. Note that CRGs corresponding to the strategies outlined in \cref{sec:introduction} would belong to $\K(\Forb(C_h))$. We will see that there will be some such CRG such that the edit distance can be determined by applying these rules to edit the graph $G(n,p)$ and then counting the expected number of edge changes which would be required. 

\subsection{Measuring the edits defined by a CRG}

Suppose we are given a CRG $K \in \K(\cH)$, and we would like to count the expected proportion of edges of $G(n,p)$ which would be changed with respect to these rules, for some fixed $p$. We will define a quadratic program $g_K(p)$ which counts exactly this quantity. In order to define $g_K(p)$, we first define a matrix $\boldM_K(p)$. We label the vertices of $K$ by $v_1, \dots v_k$, and let $\boldM_K(p)$ be a $k \times k$ matrix whose entries are given by
\begin{equation*}
    \lsb \boldM_K(p) \rsb_{ij} = \begin{cases}
                                p & \text{ if either } i = j \text{ and } v_i \in \text{VW}, \text { or } i \neq j \text{ and } v_iv_j \in \text{EW},\\
                                1-p & \text{ if either } i = j \text{ and } v_i \in \text{VB}, \text { or } i \neq j \text{ and } v_iv_j \in \text{EB},\\
                                0 & \text{ if } i \neq j \text{ and } v_iv_j \in \text{EG}.
                            \end{cases}
\end{equation*}
Then we define the quadratic program
\begin{equation}\label{eq:g_k_p}
    g_K(p) =    \begin{cases}
                    \min & \boldx^T \boldM_K(p) \boldx\\
                    \text{s.t.} & \boldx \cdot \mathbf{1} = 1\\
                         & \boldx \geq \mathbf{0}.
                \end{cases}
\end{equation}
Note that the vector $\boldx$ which has exactly one entry equal to $1$ and all other entries equal to $0$ is a feasible solution to this program. Thus, there is some optimal vector $\boldx^*$ which attains the minimum in the program above, and so $g_K(p)$ has a solution for every CRG $K$. Furthermore, for any given CRG $K$, we can easily find the value of $g_K(p)$ using the method of Lagrange multipliers.

The vector $\boldx^*$ depends on the matrix $\boldM_K(p)$, which captures information about the adjacencies in $K$. Recall that every CRG $K$ defines a partition of the random graph $G(n,p)$, where the vertices of $K$ represent parts in this partition. The vector $\boldx$ assigns a weight to every vertex of $K$. In particular, for any $v \in K$, the weight $\boldx(v)$ corresponds to the proportion of vertices of $G(n,p)$ which lie in the part corresponding to $v$, and the optimal weight vector $\boldx^*$ gives the assignment of vertices of $G(n,p)$ to parts in a way which minimises the expected proportion of edge changes required. Thus, the function $g_K(p)$ measures exactly the expected proportion of edge changes of $G(n,p)$ which the CRG $K$ defines.

The following result of Alon and Stav \cite{alon2008furthest} suggests that for any hereditary property $\cH$, we can use the function $g_K(p)$ to determine the edit distance function.
\begin{theorem}[Alon and Stav \cite{alon2008furthest}]\label{thm:alon_stav_edit_distance_equals_inf}
Let $\cH$ be a hereditary property. Then for all $p \in [0,1]$,
\[
    \text{\emph{ed}}_\cH(p) = \inf_{K \in \K(\cH)} g_K(p).
\]
\end{theorem}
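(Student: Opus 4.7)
The plan is to establish the two inequalities $\ed_\cH(p) \leq \inf_{K \in \K(\cH)} g_K(p)$ and $\ed_\cH(p) \geq \inf_{K \in \K(\cH)} g_K(p)$ separately. By \cref{thm:edit_estimated_by_random_graph}, it suffices to control the expected edit distance of $G(n,p)$ to $\cH$ as $n \to \infty$.

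For the upper bound, fix any $K \in \K(\cH)$ on vertices $v_1, \ldots, v_k$ and let $\boldx$ be an optimal weight vector for the quadratic program defining $g_K(p)$. Partition the vertex set of $G(n,p)$ into parts $V_1, \ldots, V_k$ with $|V_i| = \lfloor x_i n \rfloor$, distributing the few leftover vertices arbitrarily, and then apply the editing rules from \cref{sec:CRG}: add or delete edges within and between parts as dictated by the vertex- and edge-colorings of $K$. The resulting graph $G'$ embeds into $K$ by construction, so no $H \in \F(\cH)$ is induced in $G'$ and thus $G' \in \cH$. A direct expectation calculation yields
\[
\E \lsb \size{E(G(n,p)) \Delta E(G')} \rsb = \boldx^T \boldM_K(p) \boldx \binom{n}{2} + O(n) = g_K(p) \binom{n}{2} + O(n),
\]
so that \cref{thm:edit_estimated_by_random_graph} gives $\ed_\cH(p) \leq g_K(p)$, and, taking the infimum over $K \in \K(\cH)$, the upper bound follows.

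For the lower bound, which is the deeper direction, I would invoke Szemer\'edi's regularity lemma. Fix a large graph $G$ of density $p$ together with an optimal $G^* \in \cH$ closest to $G$, and apply the regularity lemma simultaneously to $G$ and to the symmetric difference $G \Delta G^*$, producing an equitable partition $V_1, \ldots, V_k$ such that all but an $\eps$-fraction of pairs are $\eps$-regular in both graphs. Build a CRG $K$ on $\{v_1, \ldots, v_k\}$ by coloring each vertex $v_i$ according to the internal density of $G^*[V_i]$ (white if near $0$, black if near $1$) and each edge $v_iv_j$ according to the $G^*$-density on $(V_i, V_j)$ (white if near $0$, black if near $1$, and grey if $G$ and $G^*$ essentially agree on the pair, so very few edits were applied there). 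A standard embedding argument via the counting lemma shows that if some $H \in \F(\cH)$ embedded into $K$, then blowing up this embedding inside the regular partition would produce an induced copy of $H$ inside $G^*$, contradicting $G^* \in \cH$; hence $K \in \K(\cH)$. Setting $y_i = \size{V_i}/n$, the edit density $\size{E(G) \Delta E(G^*)}/\binom{n}{2}$ differs by $o(1)$ from $\boldy^T \boldM_K(p) \boldy \geq g_K(p) \geq \inf_{K' \in \K(\cH)} g_{K'}(p)$, which yields the lower bound after $n \to \infty$ and $\eps \to 0$.

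The principal obstacle is the handling of pairs $(V_i, V_j)$ (and parts $V_i$) whose $G^*$-density is intermediate, neither near $0$ nor near $1$, while still differing substantially from that of $G$ so that they cannot be declared grey. One must argue, via optimality of the editing or a local perturbation, that in any minimizing edit such ambiguous cases can be re-routed to one of the three pure CRG options without increasing the total cost, so that the continuous density profile collapses to a genuine CRG structure. Carefully balancing the regularity parameter $\eps$, the discrete density thresholds used to color $K$, and the error terms produced by the embedding step, all taken in the correct order of quantifiers, is the technical heart of the proof.
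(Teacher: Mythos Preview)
The paper does not contain a proof of this statement: \cref{thm:alon_stav_edit_distance_equals_inf} is quoted as a known result of Alon and Stav and is used as a black box, so there is no ``paper's own proof'' against which to compare your proposal.

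That said, your outline is broadly in the right spirit for how such a result is established in the literature. The upper bound is exactly as you say: any $K \in \K(\cH)$ together with any feasible weight vector yields an editing scheme for $G(n,p)$ whose expected cost is $\boldx^T\boldM_K(p)\boldx\binom{n}{2}+O(n)$, and \cref{thm:edit_estimated_by_random_graph} converts this into $\ed_\cH(p)\le g_K(p)$. For the lower bound, the regularity-based extraction of a CRG from an optimal editing is indeed the standard route, and you have correctly identified the genuine technical issue: pairs of intermediate $G^*$-density cannot be assigned a colour naively, and one needs an additional argument (in the original works this goes through a limiting or ``type'' structure rather than a direct local perturbation) to force the density profile down to the three pure options. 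Your sketch is honest about this gap rather than glossing over it, but as written it is only a plan; turning the lower-bound paragraph into an actual proof requires substantially more than parameter bookkeeping.
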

Marchant and Thomason \cite{marchant2010extremal} later showed that there is in fact a CRG which attains the infimum in \cref{thm:alon_stav_edit_distance_equals_inf}, that is, they showed the following.
\begin{theorem}[Marchant and Thomason \cite{marchant2010extremal}]\label{thm:marchant_thomason_edit_distance_equals_min}
Let $\cH$ be a hereditary property. Then for all $p \in [0,1]$,
\[
    \text{\emph{ed}}_\cH(p) = \min_{K \in \K(\cH)} g_K(p).
\]
\end{theorem}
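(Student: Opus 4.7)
The plan is to build on \cref{thm:alon_stav_edit_distance_equals_inf}, which gives $\text{ed}_\cH(p) = \inf_{K \in \K(\cH)} g_K(p)$, and to upgrade the infimum to a minimum by reducing the optimization to one over CRGs of size bounded by a constant depending only on $p$ and $\cH$. Once such a bound is in place, one is infimizing over finitely many combinatorial types of CRGs, and each type contributes a single value of $g_K(p)$, so the infimum is attained.

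The first step is a support reduction. If $K \in \K(\cH)$ has an optimal vector $\boldx^*$ for the quadratic program $g_K(p)$ with some coordinate equal to $0$, let $K'$ be the sub-CRG of $K$ induced on the support of $\boldx^*$. Any sub-CRG of $K$ inherits the property that no $H \in \F(\cH)$ embeds (since an embedding into $K'$ is also one into $K$), so $K' \in \K(\cH)$. Moreover the restriction of $\boldx^*$ to $V(K')$ is feasible for $g_{K'}(p)$ with the same objective value, and conversely any feasible solution of $g_{K'}(p)$ extends by zero-padding to a feasible solution of $g_K(p)$; hence $g_{K'}(p) = g_K(p)$. Iterating, I may assume every CRG under consideration has a strictly positive optimal weight vector. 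Call such a $K$ \emph{$p$-minimal} if no proper sub-CRG $K' \subsetneq K$ satisfies $g_{K'}(p) \le g_K(p)$; every $K \in \K(\cH)$ admits a $p$-minimal sub-CRG of no larger $g$-value, so the infimum over $\K(\cH)$ equals the infimum over $p$-minimal CRGs in $\K(\cH)$.

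The main obstacle is proving that $p$-minimal CRGs in $\K(\cH)$ have size bounded by a constant $N = N(p, \cH)$. The natural approach is Ramsey-theoretic: for any target $N_0$, if $|V(K)|$ exceeds a suitable Ramsey number, one can find a subset $S \subseteq V(K)$ of size $N_0$ in which all vertices share a common colour and all pairs in $S$ share an edge colour. The KKT conditions for $g_K(p)$ on the positive support (namely that the coordinates of $\boldM_K(p)\boldx^*$ on the support are all equal to $g_K(p)$) impose strong constraints on the weights $\boldx^*$ assigned to vertices of $S$. One should then be able to argue that either (a) some vertex of $S$ is redundant and can be deleted without increasing $g_K(p)$, contradicting $p$-minimality, or (b) the homogeneous block $S$, together with the edge colouring between $S$ and $V(K)\setminus S$, forces an induced embedding of some forbidden graph $H \in \F(\cH)$ into $K$, contradicting $K \in \K(\cH)$. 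The size $N_0$ needed to trigger (a) or (b) depends on $p$ through the KKT constraint and on $\cH$ through the smallest forbidden graph embeddings must avoid.

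With the size bound $N(p,\cH)$ in hand, the proof concludes immediately: there are finitely many combinatorial types of CRGs on at most $N(p,\cH)$ vertices, each gives a single value of $g_K(p)$, and hence the infimum over this finite collection is a minimum, attained at some $K^* \in \K(\cH)$. Combined with \cref{thm:alon_stav_edit_distance_equals_inf} this yields $\text{ed}_\cH(p) = g_{K^*}(p) = \min_{K \in \K(\cH)} g_K(p)$, as required.
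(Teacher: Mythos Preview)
The paper does not actually prove this theorem: it is quoted from Marchant and Thomason \cite{marchant2010extremal} and used as a black box, so there is no ``paper's own proof'' to compare against. Your proposal is therefore a freestanding attempt to reprove a cited result.

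As a plan, the broad architecture is reasonable and is indeed in the spirit of the Marchant--Thomason argument: reduce to $p$-core (your ``$p$-minimal'') CRGs, show these have bounded order depending only on $p$ and $\cH$, and conclude the infimum is over a finite set. The support-reduction step and the passage to $p$-minimal sub-CRGs are fine.

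However, the heart of the argument---the bound on $|V(K)|$---is not carried out. You write that a large Ramsey-homogeneous block $S$ must trigger either (a) a redundant vertex or (b) an embedding of some $H \in \F(\cH)$, but you do not establish this dichotomy, and as stated it is not obvious. For instance, if $S$ consists of white vertices joined pairwise by white edges, then only edgeless graphs embed into $K[S]$, so (b) need not fire regardless of how large $S$ is; and it is not clear from the KKT equations alone why such a block forces (a). The actual proof in \cite{marchant2010extremal} proceeds differently: one first proves the structural classification recorded here as \cref{thm:characterisation_of_p_core_crgs} (for $p<1/2$ a $p$-core CRG has no black edges and no white edges at white vertices, and symmetrically for $p>1/2$), and then uses the resulting constraints together with the symmetrisation identities (cf.\ \cref{lem:symmetrisation}, \cref{lem:upper_bound_on_black_vertex_weight}) to bound vertex weights away from zero and hence bound $|V(K)|$. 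Your Ramsey sketch bypasses this structural step, and without it the dichotomy you appeal to does not go through.

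In short: the outline is plausible, but the proposal leaves the decisive step (``one should then be able to argue\ldots'') entirely unproved, and the specific mechanism you suggest for it is incomplete.
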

This is a key result in this area, since for any hereditary property $\cH$, the problem of determining the edit distance function is reduced to instead finding the CRG $K \in \K(\cH)$ which attains the minimum in \cref{thm:marchant_thomason_edit_distance_equals_min}. Indeed, since the strategies outlined in \cref{sec:introduction} correspond to certain CRGs $K \in \K(\Forb(C_h))$, the bounds would give an upper bound for $\ed_{\Forb(C_h)}(p)$.

An implication of \cref{thm:marchant_thomason_edit_distance_equals_min} is that for any $K \in \K(\cH)$, the function $g_K(p)$ provides an upper bound to the edit distance function. Thus, rather than examining all CRGs in $\K(\cH)$, we begin by only examining those CRGs which have all grey edges, and use these to obtain an upper bound to $\text{ed}_\cH(p)$.

We denote by $K(r,s)$ the CRG on $r+s$ vertices, which has $r$ white vertices, $s$ black vertices, and all its edges are grey. For any hereditary property $\cH$, we define the \emph{clique spectrum} to be
\begin{equation*}
    \Gamma(\cH) = \lcb (r,s) \in \Z_{\geq 0}^2 \colon H \not\mapsto K(r,s) \text{ for all } H \in \F(\cH)\rcb.
\end{equation*}
The important property of $\Gamma(\cH)$ which we will be using is its monotonicity. That is, if $(r,s) \in \Gamma(\cH)$, then for all $0 \leq r' \leq r$, $0 \leq s' \leq s$, we have $(r',s') \in \Gamma(\cH)$. This follows immediately from the definition, and gives rise to important elements of $\Gamma(\cH)$ known as \emph{extreme points}, which are pairs $(r,s) \in \Gamma(\cH)$ such that $(r+1, s), (r, s+1) \not\in \Gamma(\cH)$. We denote by $\Gamma^*(\cH)$ the set of extreme points of $\Gamma(\cH)$.

We state the following useful lemma, which allows us to observe another useful property of these grey edge CRGs. Let $K$ be a CRG. We say a sub-CRG $K'$ of $K$ is a \emph{component} if every edge leaving $K'$ is grey, that is, if for all $v \in V(K')$ and all $w \in V(K \setminus K')$, we have that $vw \in \text{EG}(K)$. So, every CRG has a `decomposition' into components, that is, a partition of the vertex set such that all edges leaving the sub-CRG induced on any part in this partition are grey. Then we can state the following lemma, a result of work by Martin \cite{martin2013edit}.
\begin{lemma}[Martin \cite{martin2013edit}]\label{lem:components}
Let $K$ be a $CRG$ with components $K^{(1)}, \dots, K^{(\ell)}$. Then
\[
    (g_K(p))^{-1} = \sum_{i=1}^{\ell} (g_{K^{(i)}}(p))^{-1}.
\]
\end{lemma}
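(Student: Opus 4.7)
The plan is to exploit the block-diagonal structure of $\boldM_K(p)$ forced by the hypothesis. Since every edge between distinct components is grey, the corresponding entries of $\boldM_K(p)$ vanish by definition, so after relabelling $\boldM_K(p)$ is block diagonal with the $i$-th diagonal block equal to $\boldM_{K^{(i)}}(p)$.

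For any feasible $\boldx$ in the program~\eqref{eq:g_k_p}, I write $\boldx^{(i)}$ for its restriction to the coordinates indexed by $V(K^{(i)})$ and set $w_i := \boldx^{(i)} \cdot \mathbf{1} \geq 0$, so that $\sum_i w_i = 1$. The block-diagonal form gives
\[
\boldx^T \boldM_K(p) \boldx \;=\; \sum_{i=1}^{\ell} (\boldx^{(i)})^T \boldM_{K^{(i)}}(p)\, \boldx^{(i)}.
\]
For each $i$ with $w_i > 0$, the rescaling $\boldy^{(i)} := \boldx^{(i)}/w_i$ is feasible for the program defining $g_{K^{(i)}}(p)$, so
\[
(\boldx^{(i)})^T \boldM_{K^{(i)}}(p) \boldx^{(i)} \;=\; w_i^2\, (\boldy^{(i)})^T \boldM_{K^{(i)}}(p) \boldy^{(i)} \;\geq\; w_i^2\, g_{K^{(i)}}(p),
\]
with equality attained by choosing $\boldy^{(i)}$ optimally within the $i$-th component; the case $w_i = 0$ forces $\boldx^{(i)} = \mathbf{0}$ and is trivial. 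Consequently the program~\eqref{eq:g_k_p} collapses to the one-dimensional weight problem
\[
g_K(p) \;=\; \min \lcb \sum_{i=1}^{\ell} w_i^2\, g_{K^{(i)}}(p) \colon w_i \geq 0,\ \sum_{i=1}^{\ell} w_i = 1 \rcb.
\]

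To finish, writing $c_i := g_{K^{(i)}}(p)$ and assuming first that every $c_i > 0$, I would apply Lagrange multipliers to this strictly convex program. The stationarity condition $2 w_i c_i = \lambda$ gives $w_i = \lambda/(2 c_i)$, and the constraint $\sum_i w_i = 1$ then forces $\lambda = 2\bigl(\sum_i c_i^{-1}\bigr)^{-1}$. Substituting back, the minimum value equals $\bigl(\sum_i c_i^{-1}\bigr)^{-1}$, which upon reciprocation is exactly the asserted identity.

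The only potential subtlety is the degenerate case where some $g_{K^{(i)}}(p) = 0$ (for example, a grey-edged component with no vertex whose colour is matched by $p$). This is not a real obstacle: concentrating all weight on that component shows $g_K(p) = 0$ directly, which is consistent with the identity under the convention $1/0 = \infty$ in the reciprocal sum. The heart of the lemma is just the standard harmonic-mean identity for minimising a block-diagonal quadratic form over the probability simplex.
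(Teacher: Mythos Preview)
Your argument is correct: the block-diagonal structure of $\boldM_K(p)$ reduces the program to minimising $\sum_i w_i^2 g_{K^{(i)}}(p)$ over the simplex, and the harmonic-mean identity finishes it. Note, however, that the paper does not actually prove this lemma; it is quoted from Martin~\cite{martin2013edit} without proof, so there is no in-paper argument to compare yours against. Your write-up is the standard proof and would be an acceptable justification here.
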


We can now state the following useful result, which suggests that for any grey-edge CRG $K(r,s)$, it is sufficient to know $r$ and $s$ in order to calculate the value of $g_{K(r,s)}(p)$.
\begin{lemma}[Martin \cite{martin2013edit}]\label{lemma:calulate_g_of_K_r_s}
\begin{equation*}
    g_{K(r,s)}(p) = \frac{p(1-p)}{r(1-p)+sp}.
\end{equation*}
\end{lemma}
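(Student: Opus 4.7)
The plan is to apply \cref{lem:components}. Since every edge of $K(r,s)$ is grey, each single vertex of $K(r,s)$ is a component in the sense of the lemma: every edge from a singleton vertex to the rest of the graph is grey by construction. Consequently $K(r,s)$ decomposes into $r+s$ singleton components, of which $r$ are white and $s$ are black.

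The next step is to evaluate $g$ on a singleton. If $K'$ consists of a single white vertex, then $\boldM_{K'}(p)$ is the $1\times 1$ matrix $[p]$, the simplex constraint forces $\boldx = [1]$, and the program \eqref{eq:g_k_p} returns $g_{K'}(p) = p$. The analogous computation for a single black vertex gives $g_{K'}(p) = 1-p$. Feeding these values into \cref{lem:components} yields
$$g_{K(r,s)}(p)^{-1} \;=\; \frac{r}{p} + \frac{s}{1-p} \;=\; \frac{r(1-p) + sp}{p(1-p)},$$
and taking reciprocals produces the claimed formula.

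I do not foresee a real obstacle: once \cref{lem:components} is in hand the argument is essentially bookkeeping. As a sanity check, one could verify the formula directly via Lagrange multipliers on the diagonal matrix $\boldM_{K(r,s)}(p)$ (whose first $r$ diagonal entries are $p$ and whose remaining $s$ diagonal entries are $1-p$, with all off-diagonal entries zero): stationarity yields $x_i = \lambda/(2p)$ for each white coordinate and $x_j = \lambda/(2(1-p))$ for each black coordinate, both automatically non-negative so the constraint $\boldx \geq \mathbf{0}$ is never active at the optimum, and substituting into $\boldx \cdot \mathbf{1} = 1$ recovers the same value of $g_{K(r,s)}(p)$.
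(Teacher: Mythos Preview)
Your argument is correct. The paper does not actually supply a proof of this lemma (it is quoted from Martin~\cite{martin2013edit}), but its placement immediately after \cref{lem:components}, together with the sentence ``We can now state the following useful result\ldots'', makes clear that the intended derivation is exactly the one you give: decompose $K(r,s)$ into its $r+s$ singleton components, evaluate $g$ on each, and sum reciprocals.
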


For any pair $(r, s) \in \Gamma^*(\cH)$, we have $K(r,s) \in \K(\cH)$. By minimising over all the grey edge CRGs in $\K(\cH)$, we obtain an upper bound for $\text{ed}_\cH(p)$. Formally, we define
\begin{equation*}
    \gamma_\cH(p) = \min \lcb g_{K(r,s)}(p) \colon (r,s) \in \Gamma(\cH) \rcb = \min \lcb \frac{p(1-p)}{r(1-p)+sp} \colon (r,s) \in \Gamma(\cH) \rcb.
\end{equation*}
Then $\gamma_\cH(p) \geq \text{ed}_\cH(p)$. Furthermore, suppose that $(r,s) \in \Gamma^*(\cH)$. Then for all  $0 \leq r' \leq r$, $0 \leq s' \leq s$ we have $g_{K(r,s)}(p) \leq g_{K(r',s')}(p)$. Thus when calculating $\gamma_\cH(p)$, it suffices to consider only those pairs $(r,s)$ which are extreme points of the clique spectrum. 

The advantage of this is that the value of $\gamma_\cH(p)$ is determinable for any hereditary property, and thus this upper bound is easier to calculate than directly minimising the function $g_K(p)$ over all $K \in K(\cH)$. Through the course of this paper, we may refer to a CRG which `attains $\gamma_\cH(p)$' or `attains $\text{ed}_\cH(p)$' for some value of $p$, by which we mean a CRG $K$ for which $g_K(p) = \gamma_\cH(p)$, or $g_K(p) = \text{ed}_\cH(p)$, respectively. We will also define the following special type of CRG. This was originally introduced by Peck \cite{peck2013edit}. 
\begin{definition}\label{def:candidate_CRG}
For a hereditary property $\cH$, we say that a CRG $K$ is a \emph{candidate CRG for $\cH$} if $K \in \K(\cH)$ and $g_K(p) < \gamma_\cH(p)$.
\end{definition}
If the hereditary property $\cH$ is clear from the context, we omit the phrase `for $\cH$' from \cref{def:candidate_CRG}.

\subsection{The $p$-core CRGs and symmetrisation}

As we have seen previously, the edit distance function $\text{ed}_\cH(p)$ can be determined by finding the CRG $K \in \K(\cH)$ which minimises $g_K(p)$. We say a CRG $K$ is \emph{$p$-core} if $g_K(p)<g_{K'}(p)$ for any sub-CRG $K'$ of $K$. Since we know by Theorem~\ref{thm:marchant_thomason_edit_distance_equals_min} that there exists some CRG which minimises $g_K(p)$, this definition immediately implies that there exists a $p$-core CRG which minimises $g_K(p)$. Marchant and Thomason~\cite{marchant2010extremal} identified the following useful classification of $p$-core CRGs.
\begin{theorem}[Marchant-Thomason \cite{marchant2010extremal}]\label{thm:characterisation_of_p_core_crgs}
Let $K$ be a $p$-core CRG. Then the following holds.
\begin{enumerate}
    \item If $p = 1/2$, then all edges of $K$ are grey.
    \item If $p < 1/2$, then $\text{EB} = \emptyset$ and there are no white edges incident to white vertices.
    \item If $p > 1/2$, then $\text{EW} = \emptyset$ and there are no black edges incident to black vertices.
\end{enumerate}
\end{theorem}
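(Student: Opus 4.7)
The plan is to analyse the optimum $\boldx^*$ of the quadratic program defining $g_K(p)$ and use sub-CRG perturbations to constrain the colouring of $K$. Set $\lambda := g_K(p) = (\boldx^*)^T \boldM_K(p) \boldx^*$. First I would show that every entry of $\boldx^*$ is strictly positive: if $x_i^* = 0$, then restricting $\boldx^*$ to $V(K) \setminus \{v_i\}$ gives a feasible solution on the sub-CRG $K - v_i$ of value $\lambda$, so $g_{K - v_i}(p) \leq \lambda$, contradicting the $p$-core hypothesis. With $\boldx^* > \mathbf{0}$, the Karush-Kuhn-Tucker conditions then yield the Lagrange relation $\boldM_K(p)\, \boldx^* = \lambda \mathbf{1}$.

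The main tool will be a ``delete-and-absorb'' perturbation. For distinct $v_i, v_j \in V(K)$, I would consider the sub-CRG $K' = K - v_j$ together with the weight vector $\boldy$ on $V(K')$ defined by $y_i = x_i^* + x_j^*$ and $y_\ell = x_\ell^*$ for $\ell \notin \{i, j\}$. Expanding $\boldy^T \boldM_{K'}(p) \boldy$ and repeatedly applying $\boldM_K(p) \boldx^* = \lambda \mathbf{1}$ yields the identity
\[
\boldy^T \boldM_{K'}(p) \boldy - \lambda = (x_j^*)^2 \lb [\boldM_K(p)]_{ii} + [\boldM_K(p)]_{jj} - 2[\boldM_K(p)]_{ij} \rb.
\]
Since $x_j^* > 0$, the $p$-core property forces the bracketed quantity to be strictly positive for every ordered pair $(i,j)$, giving the universal constraint $2[\boldM_K(p)]_{ij} < [\boldM_K(p)]_{ii} + [\boldM_K(p)]_{jj}$.

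I would then specialise this constraint to each non-grey edge. When $p = 1/2$ every diagonal entry and every black or white edge entry of $\boldM_K(p)$ equals $1/2$, so the constraint fails for any non-grey edge and all edges must be grey. When $p < 1/2$, a black edge $v_iv_j$ has $[\boldM_K(p)]_{ij} = 1-p$ while both $[\boldM_K(p)]_{ii}, [\boldM_K(p)]_{jj} \leq 1-p$, giving a contradiction, so $\EB = \emptyset$; and a white edge whose endpoints are both white has all three relevant entries equal to $p$, again a contradiction. The case $p > 1/2$ follows from the dual argument obtained by swapping the roles of black and white and replacing $p$ by $1-p$.

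The hard part will be ruling out the remaining mixed configuration: a white edge between a white vertex and a black vertex when $p < 1/2$ (and dually for $p > 1/2$), since here the delete-and-absorb expression evaluates to $(x_j^*)^2(1-2p) > 0$ and does not directly violate the $p$-core property. To close this case I would exploit the Lagrange equations at \emph{both} endpoints together with the strict positivity of $\boldx^*$ to construct either a more carefully weighted absorption into a third vertex, or a two-step modification that forces some coordinate of the rescaled optimum on a sub-CRG to vanish. This refinement is the technical heart of the Marchant--Thomason argument, and once it is in place, the symmetric treatment completes the characterisation for $p > 1/2$.
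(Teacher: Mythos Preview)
The paper does not prove this theorem: it is quoted from Marchant and Thomason \cite{marchant2010extremal} and used as a black box, so there is no in-paper argument to compare your proposal against.

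On the substance of your sketch: the positivity of $\boldx^*$, the Lagrange identity $\boldM_K(p)\boldx^* = \lambda\mathbf{1}$, and the delete-and-absorb computation are all correct, and they cleanly dispose of the cases $p=1/2$, $\EB=\emptyset$ for $p<1/2$, and white edges between two white vertices. However, the final paragraph is not a proof but a promissory note. For a white edge between a white vertex $v_i$ and a black vertex $v_j$ with $p<1/2$, your own calculation shows the absorb inequality gives $1-2p>0$, so neither direction of absorption yields a contradiction; invoking ``a more carefully weighted absorption into a third vertex, or a two-step modification'' does not tell the reader what to do or why it works. In particular, note that in the two-vertex example (one white vertex, one black vertex, joined by a white edge) the optimum already has $x_j^*=0$, so positivity alone kills it --- but in larger CRGs you need an argument that actually produces either a zero coordinate or a non-increasing sub-CRG, and you have not supplied one. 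Until that case is genuinely closed, the characterisation is incomplete; if you want a self-contained proof you should consult the original Marchant--Thomason paper for the merging/symmetrisation step that handles it.
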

We consider again the quadratic program $g_K(p)$ defined in~\eqref{eq:g_k_p}. Marchant and Thomason \cite{marchant2010extremal} showed that if $K$ is a $p$-core CRG, then the optimal vector for this quadratic program is in fact unique, and moreover that this optimal vector $\boldx$ contains no zero entries.

Now the vector $\boldx$ assigns to each vertex $v \in V(K)$ a weight $\boldx(v)$. We write $d_B(v)$ for the weighted degree of $v$ along black edges, that is, $d_B(V) := \sum_{u \in V(K) : uv \in EB} \boldx(u)$, and define $d_W(v)$ and $d_G(v)$ analogously for white and grey edges.  Martin \cite{martin2013edit} found the following bounds on the quantity $d_G(v)$ using symmetrisation techniques.
\begin{lemma}[Martin \cite{martin2013edit}]\label{lem:symmetrisation}
Let $p \in (0,1/2]$ and let $K$ be a $p$-core CRG with optimal weight function $\boldx$. Then $\boldx(v) = g_K(p)/p$ for all $v \in \text{\emph{VW}}(K)$. Moreover, for all $v \in \text{\emph{VB}}(K)$, we have
\begin{equation*}
   d_G(v) = \frac{p-g_K(p)}{p}+\frac{1-2p}{p}\boldx(v).
\end{equation*}
\end{lemma}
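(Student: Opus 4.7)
The plan is to apply the KKT conditions directly to the quadratic program $g_K(p)$, exploiting the already-quoted Marchant--Thomason fact that when $K$ is $p$-core the optimal weight vector $\boldx$ is unique and strictly positive. Since all inequality constraints $\boldx \geq \mathbf{0}$ are therefore inactive, a Lagrange-multiplier argument for the equality constraint $\boldx \cdot \mathbf{1} = 1$ yields $2\boldM_K(p)\boldx = \lambda \mathbf{1}$ at the optimum, and dotting with $\boldx$ produces $\lambda = 2 g_K(p)$. Hence for every $v \in V(K)$,
\[
(\boldM_K(p)\boldx)_v = g_K(p).
\]

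I would then unfold this identity at white and black vertices using the structural information supplied by \cref{thm:characterisation_of_p_core_crgs} in the regime $p \leq 1/2$: namely $\text{EB} = \emptyset$ and no white edge is incident to a white vertex. For $v \in \text{VW}$ these two properties together force every edge at $v$ to be grey, so the only nonzero term in the row sum at $v$ is the diagonal contribution $p \boldx(v)$. Thus $p \boldx(v) = g_K(p)$, giving $\boldx(v) = g_K(p)/p$ as required. For $v \in \text{VB}$, the diagonal contributes $(1-p)\boldx(v)$, the white-edge neighbours contribute $p\, d_W(v)$, grey neighbours contribute nothing, and there are no black-edge neighbours since $\text{EB} = \emptyset$. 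So the identity reads
\[
(1-p)\boldx(v) + p\, d_W(v) = g_K(p).
\]
Since every edge at $v$ is white or grey, $d_W(v) + d_G(v) = 1 - \boldx(v)$; eliminating $d_W(v)$ and solving for $d_G(v)$ yields the stated formula.

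I expect no significant obstacle, since the heavy lifting (uniqueness and positivity of $\boldx$, plus the classification of $p$-core CRGs) has been packaged into the earlier results. The proof is then a direct KKT computation followed by a two-case expansion of the identity $(\boldM_K(p)\boldx)_v = g_K(p)$. The term \emph{symmetrisation} in the lemma's attribution presumably refers to the techniques underlying those preliminary results rather than to any further step needed here.
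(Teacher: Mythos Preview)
Your proof is correct. The paper does not supply its own proof of this lemma; it is quoted verbatim from Martin~\cite{martin2013edit} and stated without argument, so there is no in-paper proof to compare against.

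For what it is worth, your KKT/Lagrange-multiplier derivation is exactly the standard route to this identity in the edit-distance literature: strict positivity of the optimal $\boldx$ (Marchant--Thomason) kills the inequality constraints, the stationarity condition $\boldM_K(p)\boldx = g_K(p)\mathbf{1}$ follows, and reading off the row at a white or black vertex using the structural restrictions of \cref{thm:characterisation_of_p_core_crgs} gives both conclusions. The algebra you sketch for the black-vertex case is correct. Your closing remark about the name ``symmetrisation'' is also on target: in Martin's original paper the lemma is obtained as part of a broader weight-shifting argument, but the content you need here is precisely the first-order optimality condition, and you have extracted it cleanly.
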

\cref{lem:symmetrisation} can be thought of as a symmetrisation lemma,  and tells us that the weight is distributed evenly among all vertices in $\VW(K)$ by the vector $\boldx$. This gives a way of determining the value of $\boldx$ at any vertex. We can use this to state the following useful lemma.
\begin{lemma}[Martin \cite{martin2013edit}]\label{lem:upper_bound_on_black_vertex_weight}
Let $p \in (0,1/2]$ and let $K$ be a $p$-core CRG with optimal weight function $\boldx$. Then for all $v \in \text{\emph{VB}}(K)$, we have $\boldx(v) \leq g_K(p)/(1-p)$.
\end{lemma}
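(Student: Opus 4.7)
The plan is to exploit the KKT (Lagrange multiplier) optimality conditions for the quadratic program $g_K(p)$ together with the structural description of $p$-core CRGs given in \cref{thm:characterisation_of_p_core_crgs}.

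First, I would set up the KKT conditions. Since $\boldx$ is optimal for the program in \eqref{eq:g_k_p} and since it is known (from the cited work of Marchant--Thomason) that the optimal weight vector of a $p$-core CRG has all entries strictly positive, the non-negativity constraints $\boldx \geq \mathbf{0}$ are all inactive at the optimum. The matrix $\boldM_K(p)$ is symmetric, so differentiating $\boldx^T \boldM_K(p) \boldx - \lambda(\boldx \cdot \mathbf{1} - 1)$ gives $(\boldM_K(p)\boldx)_v = \lambda/2$ for every vertex $v$. Multiplying by $\boldx(v)$ and summing then yields $g_K(p) = \boldx^T \boldM_K(p)\boldx = \lambda/2$, so
\[
    (\boldM_K(p)\boldx)_v = g_K(p) \qquad \text{for every } v \in V(K).
\]

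Next, I would specialise this identity to a black vertex $v$ and expand the left-hand side using the definition of $\boldM_K(p)$. Writing $d_B(v), d_W(v), d_G(v)$ as in the paragraph preceding \cref{lem:symmetrisation}, and using that the diagonal entry $[\boldM_K(p)]_{vv}$ equals $1-p$ since $v \in \VB$, one gets
\[
    (\boldM_K(p)\boldx)_v = (1-p)\boldx(v) + (1-p)d_B(v) + p\, d_W(v) + 0 \cdot d_G(v).
\]
Here is where the structural input enters: by \cref{thm:characterisation_of_p_core_crgs}, the assumption $p \in (0,1/2]$ forces $\EB(K) = \emptyset$ (and for $p=1/2$ one has in fact only grey edges), so $d_B(v) = 0$. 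The identity therefore reduces to $(1-p)\boldx(v) + p\,d_W(v) = g_K(p)$. Since weights are non-negative we have $d_W(v) \geq 0$, and hence
\[
    (1-p)\boldx(v) \leq g_K(p),
\]
which rearranges to the claimed bound $\boldx(v) \leq g_K(p)/(1-p)$.

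I do not expect any serious obstacle: the argument is a direct KKT computation, and the only subtlety is noticing that the $p$-core hypothesis, via \cref{thm:characterisation_of_p_core_crgs}, eliminates the black-edge contribution to $(\boldM_K(p)\boldx)_v$, which is precisely what turns the KKT identity into the desired inequality. For $p = 1/2$ the argument in fact gives equality $\boldx(v) = g_K(p)/(1-p) = 2g_K(p)$, consistent with the analogous statement for white vertices in \cref{lem:symmetrisation}.
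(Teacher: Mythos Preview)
Your argument is correct. The paper does not itself prove this lemma (it is quoted from Martin), but the sentence ``We can use this to state the following useful lemma'' indicates that the intended derivation is from \cref{lem:symmetrisation}: combining the identity $d_G(v) = \frac{p-g_K(p)}{p} + \frac{1-2p}{p}\boldx(v)$ with the trivial bound $d_G(v) \leq 1 - \boldx(v)$ and rearranging gives $(1-p)\boldx(v) \leq g_K(p)$. Your route is the same computation in a different guise: the KKT identity $(1-p)\boldx(v) + p\,d_W(v) = g_K(p)$ that you derive is exactly equivalent to the formula in \cref{lem:symmetrisation} once one uses $d_W(v) = 1 - \boldx(v) - d_G(v)$ (valid since $d_B(v)=0$), and your step ``$d_W(v) \geq 0$'' is the same inequality as ``$d_G(v) \leq 1 - \boldx(v)$''. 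So the two arguments are not genuinely different; you have simply re-derived the Lagrangian identity from scratch rather than quoting it via \cref{lem:symmetrisation}, which makes your write-up more self-contained at the cost of a little repetition.
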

\section{Determining $\ed_{\Forb(C_h^t)}(p)$}\label{sec:proof}

In this section we aim to determine $\ed_{\Forb(C_h^t)}(p)$. Recall that Berikkyzy, Martin and Peck \cite{berikkyzy2019edit} determined this for all $p$ in the case when $(t+1) \nmid h$ and for $p \in [1/\ceil{h/(2t+1)}, 1]$ in the case when $(t+1) \mid h$. Therefore, we will be focusing on determining this function for $0 \leq p \leq 1/\ceil{h/(2t+1)}$ in the case when $(t+1) \mid h$.

For the remainder of this section, we let $\cH= \Forb(C_h^t)$, for integers $h$ and $t$.

\subsection{Preliminaries}

We begin by stating the value of $\gamma_\cH(p)$, which was determined by Berikkyzy, Martin and Peck \cite{berikkyzy2019edit}. 

\begin{lemma}[Berikkyzy, Martin and Peck \cite{berikkyzy2019edit}]\label{plem:gamma_function_general}
Let $t \geq 1$ and $h \geq \max\{t(t+1), 4\}$ be integers. Then for all $p \in [0,1]$, the following holds.
\begin{enumerate}
    \item\label{item:gamma_function_general_1} If $(t+1) \not| \,\, h$, then
        \[
            \gamma_\cH(p) = \min_{r \in \{0, 1, \dots, t\}} \lcb \min \lcb \frac{p}{t+1}, \frac{p(1-p)}{r(1-p)+\lb \ceil{\frac{h}{t+r+1}} -1 \rb p} \rcb \rcb.
        \]
    \item\label{item:gamma_function_general_2} If $(t+1) \mid h$ then 
        \[
            \gamma_\cH(p) = \min_{r \in \{0, 1, \dots, t\}} \lcb \frac{p(1-p)}{r(1-p)+\lb \ceil{\frac{h}{t+r+1}} -1 \rb p} \rcb.
        \]
\end{enumerate}
\end{lemma}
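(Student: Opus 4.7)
The plan is to compute the extreme points of the clique spectrum $\Gamma(\cH) = \Gamma(\Forb(C_h^t))$, since by definition $\gamma_\cH(p)$ is the minimum of $g_{K(r,s)}(p) = \frac{p(1-p)}{r(1-p)+sp}$ over $(r,s) \in \Gamma(\cH)$, and monotonicity ensures this minimum is attained at extreme points. Unpacking the embedding condition for $K(r,s)$ (whose edges are all grey), $C_h^t \mapsto K(r,s)$ iff $V(C_h^t)$ admits a partition into $r$ parts that are independent in $C_h^t$ and $s$ parts that are cliques in $C_h^t$. The hypothesis $h \geq \max\{t(t+1),4\}$ forces $h > 2t+1$, so cliques of $C_h^t$ are precisely arcs of at most $t+1$ consecutive cycle vertices, while independent sets are vertex subsets pairwise at cyclic distance at least $t+1$.

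For each $r \geq 0$, let $s^*(r)$ be the minimum $s$ admitting such a partition (or $\infty$ if none exists). Then $(r,s) \in \Gamma(\cH)$ iff $s < s^*(r)$, so every extreme point has the form $(r, s^*(r)-1)$ for some $r$ with $s^*(r) \geq 1$. The central claims are: (a) $s^*(r) = \lceil h/(t+r+1) \rceil$ for $r \in \{0,1,\dots,t\}$; and (b) $s^*(t+1) = 0$ iff $(t+1) \mid h$, so that $(t+1,0)$ is an extreme point precisely when $(t+1) \nmid h$. Granted these, applying \cref{lemma:calulate_g_of_K_r_s} to each candidate extreme point and taking the minimum gives the two displayed formulas; in particular the term $p/(t+1)$ in part (i) corresponds to $(t+1,0)$, since $g_{K(t+1,0)}(p) = \frac{p(1-p)}{(t+1)(1-p)} = \frac{p}{t+1}$.

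The upper bound in (a) comes from an explicit construction: partition the cycle into $\lceil h/(t+r+1)\rceil$ consecutive arcs of length close to $t+r+1$, and devote the first $r$ vertices of each arc to independent sets $I_1,\dots,I_r$ (one per set), with the remaining vertices forming a single clique. When $(t+r+1) \mid h$ this is immediate, since same-colour representatives in adjacent arcs sit at cyclic distance $t+r+1 \geq t+1$; otherwise some care is needed to distribute the remainder across block sizes so that every adjacent pair of same-colour representatives still sits at cyclic distance $\geq t+1$. The upper half of (b) in the divisible case uses the standard colouring $v_i \mapsto i \bmod (t+1)$.

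The main obstacle is the matching lower bound $s^*(r) \geq \lceil h/(t+r+1)\rceil$ for $r \leq t$. Suppose a valid partition uses $r \leq t$ independent sets and $s$ cliques. The case $s = 0$ is ruled out by $\chi(C_h^t) \geq t+1 > r$, which follows from the clique number of $C_h^t$ being $t+1$. For $s \geq 1$, between two consecutive cliques in the cyclic order lies an arc of $m$ gap vertices, and the subgraph of $C_h^t$ induced on this arc contains the path power $P_m^t$ as a spanning subgraph, so its chromatic number is at least $\chi(P_m^t) = \min(m, t+1)$. Since only $r \leq t$ colours are available, we must have $m \leq r$. Summing the $s$ clique lengths (each $\leq t+1$) and the $s$ gap lengths (each $\leq r$) yields $h \leq s(t+r+1)$, i.e.\ $s \geq \lceil h/(t+r+1)\rceil$. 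Finally, the non-divisible half of (b) follows from noting that in any $(t+1)$-colouring of $C_h^t$, every $t+1$ consecutive vertices form a clique and therefore receive all $t+1$ colours, forcing $c(v_{i+1}) \equiv c(v_i) + 1 \pmod{t+1}$; this wraps around consistently only when $(t+1) \mid h$.
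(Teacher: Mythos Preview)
The paper does not prove this lemma; it is quoted verbatim as a result of Berikkyzy, Martin and Peck \cite{berikkyzy2019edit} and used as a black box, so there is no ``paper's own proof'' to compare against. Your outline is the natural one and is essentially sound, but one structural claim is not quite right as stated.

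You assert that for $h>2t+1$ the cliques of $C_h^t$ are precisely arcs of at most $t+1$ consecutive vertices, and your lower bound for $s^*(r)$ relies on this when you speak of ``two consecutive cliques in the cyclic order'' and sum $s$ clique lengths and $s$ gap lengths. The claim actually needs $h\geq 3t+1$: fixing $v_0\in S$, every other vertex of a clique $S$ lies in $\{v_0-t,\dots,v_0+t\}$, and if $v_0+a,v_0-b\in S$ with $a,b\geq 1$ then their cyclic distance is $\min(a+b,\,h-a-b)$, which forces $a+b\leq t$ once $h-a-b\geq h-2t\geq t+1$. For $h\leq 3t$ this fails; e.g.\ in $C_6^2$ (allowed since $t(t+1)=6$) the set $\{1,3,5\}$ is a clique with all pairwise distances equal to $2$, yet is not contained in any arc of three consecutive vertices. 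Under the hypothesis $h\geq\max\{t(t+1),4\}$ this only bites at $(h,t)=(6,2)$, where $C_6^2\cong K_{2,2,2}$ and the clique spectrum can be read off directly; but you should either add that check or prove the arc property under the correct hypothesis $h\geq 3t+1$ and then treat $(6,2)$ separately.

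Two smaller points. First, your upper-bound construction needs every block to have length at least $t+1$ (so that same-index representatives in $I_j$ are at cyclic distance $\geq t+1$) as well as at most $t+r+1$ (so that the leftover vertices form a clique); the phrase ``some care is needed'' hides a genuine feasibility check that $\lceil h/(t+r+1)\rceil\cdot(t+1)\leq h$, which does not follow from $h>2t+1$ alone and should be argued. Second, in part~(b) what is forced is $c(v_{i+t+1})=c(v_i)$ (since the windows $\{v_i,\dots,v_{i+t}\}$ and $\{v_{i+1},\dots,v_{i+t+1}\}$ are both rainbow cliques), not literally $c(v_{i+1})\equiv c(v_i)+1$; the periodicity conclusion is the same, but the stated recurrence is not what the argument yields.
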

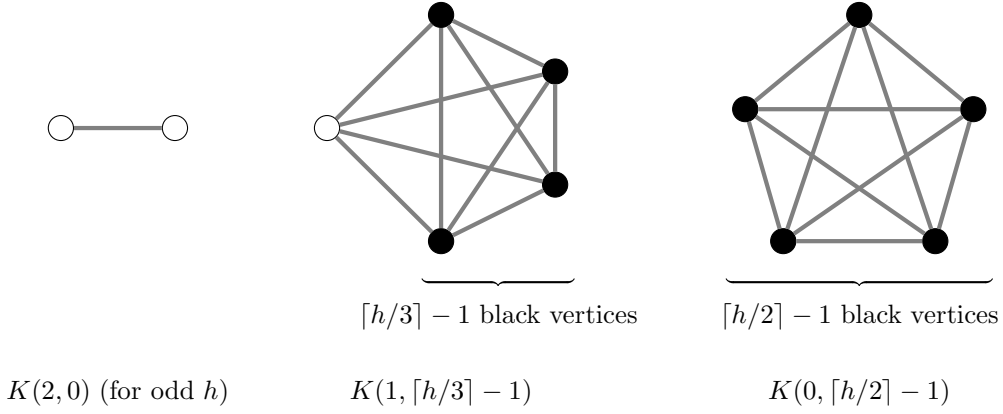
\begin{figure}
\begin{center}
\begin{tikzpicture}
% Nodes for K(2,0)
\node[shape=circle,draw=black,fill=white] (A) at (3.5,2.5){};
\node[shape=circle,draw=black,fill=white] (B) at (5,2.5){};

%%%%%%%%%%%%%%%%%%%%%%%%%%%%%%%%%%%%%%

% Nodes for K(1, h/3 -1)
\node[shape=circle,draw=black,fill=white] (C) at (7,2.5){};
\node[shape=circle,draw=black,fill=black] (D) at (8.5,4){};
\node[shape=circle,draw=black,fill=black] (E) at (10,3.25){};
\node[shape=circle,draw=black,fill=black] (F) at (10,1.75){};
\node[shape=circle,draw=black,fill=black] (G) at (8.5,1){};

%%%%%%%%%%%%%%%%%%%%%%%%%%%%%%%%%%%%%%
% Nodes for K(0,h/2-1)
\node[shape=circle,draw=black,fill=black] (H) at (12.5,2.75){};
\node[shape=circle,draw=black,fill=black] (I) at (14,4){};
\node[shape=circle,draw=black,fill=black] (J) at (15.5,2.75){};
\node[shape=circle,draw=black,fill=black] (K) at (15,1){};
\node[shape=circle,draw=black,fill=black] (L) at (13,1){};

%%%%%%%%%%%%%%%%%%%%%%%%%%%%%%%%%%%%%%

% Edge for K(2,0)
\draw[ultra thick,gray] (A) -- (B);

%%%%%%%%%%%%%%%%%%%%%%%%%%%%%%%%%%%%%%

% Edges for K(1, h/3-1) with dashed lines
% \draw[ultra thick,gray] (C) -- (D);
% \draw[ultra thick,gray] (C) -- (E);
% \draw[ultra thick,gray] (E) -- (D);
% \draw[ultra thick, gray] (D) -- (8.5,2.5);
% \draw[ultra thick, gray, dashed] (8.5,2.5) -- (G);
% \draw[ultra thick, gray] (E) -- (10,2.5);
% \draw[ultra thick, gray, dashed] (10,2.5) -- (F);
% \draw[ultra thick, gray] (E) -- (9.25,2.125);
% \draw[ultra thick, gray, dashed] (9.25, 2.125) -- (G);
% \draw[ultra thick, gray] (F) -- (G);
% \draw[ultra thick, gray, dashed] (C) -- (F);
% \draw[ultra thick, gray, dashed] (C) -- (G);
% \draw[ultra thick, gray] (D) -- (9.25,2.825);
% \draw[ultra thick, gray, dashed] (9.25,2.825) -- (F);

%%%%%%%%%%%%%%%%%%%%%%%%%%%%%%%%%%%%%%

% Edges for K(1, h/3-1) without dashed lines:

\draw[ultra thick,gray] (C) -- (D);
\draw[ultra thick,gray] (C) -- (E);
\draw[ultra thick,gray] (C) -- (F);
\draw[ultra thick,gray] (C) -- (G);
\draw[ultra thick,gray] (D) -- (E);
\draw[ultra thick,gray] (D) -- (F);
\draw[ultra thick,gray] (D) -- (G);
\draw[ultra thick,gray] (E) -- (F);
\draw[ultra thick,gray] (E) -- (G);
\draw[ultra thick,gray] (F) -- (G);

%%%%%%%%%%%%%%%%%%%%%%%%%%%%%%%%%%%%%%

% Edges for K(0, h/2-1)
\draw[ultra thick,gray] (H) -- (I);
\draw[ultra thick,gray] (H) -- (J);
\draw[ultra thick,gray] (H) -- (K);
\draw[ultra thick,gray] (H) -- (L);
\draw[ultra thick,gray] (I) -- (J);
\draw[ultra thick,gray] (I) -- (K);
\draw[ultra thick,gray] (I) -- (L);
\draw[ultra thick,gray] (J) -- (K);
\draw[ultra thick,gray] (J) -- (L);
\draw[ultra thick,gray] (K) -- (L);

%%%%%%%%%%%%%%%%%%%%%%%%%%%%%%%%%%%%%%

% with curly brackets:
\draw[decorate, decoration = {calligraphic brace,mirror}, thick] (8.25,0.5) --  (10.25,0.5);
\node at (9.25,0) {$\ceil{h/3}-1$ black vertices};

\draw[decorate, decoration = {calligraphic brace,mirror}, thick] (12.25,0.5)-- (15.75,0.5);
\node at (14,0) {$\ceil{h/2}-1$ black vertices};

\node (K20) at (4.25,-1) {$K(2,0)$ (for odd $h$)};
\node (K1h3) at (8.5,-1) {$K(1,\ceil{h/3}-1)$};
\node (K0h2) at (14,-1) {$K(0, \ceil{h/2}-1)$};

%%%%%%%%%%%%%%%%%%%%%%%%%%%%%%%%%%%%%%

% without curly brackets:
% \node (K20) at (4.25,0) {$K(2,0)$ (for odd $h$)};
% \node (K1h3) at (8.5,0) {$K(1,\ceil{h/3}-1)$};
% \node (K0h2) at (14,0) {$K(0, \ceil{h/2}-1)$};

\end{tikzpicture}
\caption{The structures which correspond to $\Gamma^*(\Forb(C_h))$.}
\label{fig:Gamma_Forb_C_h}
\end{center}
\end{figure}

In \cref{fig:Gamma_Forb_C_h}, we see the structures which correspond to $\Gamma^*(\Forb(C_h))$ and by applying \cref{lemma:calulate_g_of_K_r_s}, we obtain the values of $g_K(p)$ for each of these CRGs $K$. This gives $\gamma_{\Forb(C_h)}(p)$, which is exactly equal to the right hand side of \cref{plem:gamma_function_general} in the case $t=1$. Observe that these are exactly the bounds given by the strategies in \cref{sec:introduction}, and in particular, the strategies in \cref{sec:introduction} correspond exactly to $\Gamma(\Forb(C_h))$. We recall the notion of a candidate CRG as in \cref{def:candidate_CRG}, that is, consider a CRG $K \in \K(\cH)$ such that $g_K(p) < \gamma_\cH(p)$ for some $p$. By \cref{pthm:berikkyzy_powers_of_cycles}, we know that in order for such a CRG to exist, we must have $(t+1) \mid h$ and $p \leq 1/\ceil{h/(2t+1)}$. We will determine some characteristics for such a candidate CRG.

By taking sub-CRGs, we may assume that $K$ is $p$-core. Thus, in particular by \cref{thm:characterisation_of_p_core_crgs}, in the case when $p \leq 1/\ceil{h/(2t+1)}$, we may assume that all edges of $K$ are grey or white, and that there are no white edges incident to white vertices. Here, we are assuming that $1/\ceil{h/(2t+1)} \leq 1/2$, which is true provided that 
\[h \geq 2(2t+1).\] 
So, in particular, we have this lower bound on $h$ throughout. Recall that $\VW$ is the set of white vertices of $K$, and $\VB$ is the set of black vertices of $K$. Let $K_B = K[\VB]$. The following lemma gives some structural conditions on $K$.

\begin{lemma}[Berikkyzy, Martin and Peck \cite{berikkyzy2019edit}]\label{lem:forbidden_cycles}
Let $p \in (0, 1/2]$\ and let $t \geq 1$ and $h \geq 2t+2$ be integers. Let $K \in \K(\cH)$ be $p$-core with $r$ white vertices.
\begin{enumerate}
    \item\label{item:lem_forbidden_cycles_1} If $r \in \{ 0, \dots, t-1 \}$ and $h \geq t(t-1)$, then $K_B$ contains no grey cycle of length in $\lcb \ceil{\frac{h}{t+r+1}}, \dots, \floor{\frac{h}{t}} \rcb$.
    \item\label{item:lem_forbidden_cycles_2} If $r = t$, then $\size{V(K_B)} \leq \ceil{\frac{h}{2t+1}}-1$.
    \item\label{item:lem_forbidden_cycles_3} If $r \geq t+1$, then $(t+1) \nmid h$ and $V(K_B)= \emptyset$.
\end{enumerate}
\end{lemma}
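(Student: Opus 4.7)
The plan is to prove all three parts by contradiction: assuming the stated conclusion fails, I would construct an embedding $\phi:V(C_h^t)\to V(K)$, contradicting $K\in\K(\cH)$. All embeddings will follow a common template: partition $V(C_h)$ cyclically into consecutive arcs, then either send an entire arc to a single vertex $w\in V(K_B)$ (valid precisely when the arc has at most $t+1$ vertices, so that its image is a clique of $C_h^t$), or distribute its initial vertices across distinct white vertices (so that vertices sharing a white vertex, coming from different arcs, are at $C_h$-distance $>t$ and thus form an independent set of $C_h^t$). Since $K$ is $p$-core with $p\le 1/2$, all edges incident to a white vertex are grey, so the only genuinely restrictive edges are white edges within $K_B$; these force any two arcs assigned to distinct $w_j,w_{j'}\in V(K_B)$ to be at $C_h$-distance $>t$, unless the edge $w_jw_{j'}$ is guaranteed grey (e.g.\ a consecutive pair on the grey cycle).

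For part (iii), I would first show $V(K_B)=\emptyset$. If some $w\in V(K_B)$ existed, picking any $t+1$ white vertices $u_1,\ldots,u_{t+1}$ and sending $t+1$ consecutive vertices of $C_h$ to $w$ leaves an induced subgraph isomorphic to the $t$-th power of a path on $h-t-1\ge t+1$ vertices, which has chromatic number exactly $t+1$; sending each colour class to a distinct $u_i$ completes an embedding, a contradiction. With $V(K_B)=\emptyset$, the $p$-core structure forces $K=K(r,0)$, and $K\in\K(\cH)$ then requires $\chi(C_h^t)>r\ge t+1$; as $\chi(C_h^t)=t+1$ if and only if $(t+1)\mid h$, we conclude $(t+1)\nmid h$.

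For part (ii), set $s_0=\ceil{h/(2t+1)}$ and suppose for contradiction that $|V(K_B)|\ge s_0$. I would pick $s_0$ black vertices $w_1,\ldots,w_{s_0}$ and partition $V(C_h)$ cyclically into arcs $A_1,\ldots,A_{s_0}$ of sizes $t+b_j$, with $1\le b_j\le t+1$ and $\sum b_j=h-s_0t$; this is feasible exactly because $s_0(t+1)\le h\le s_0(2t+1)$. In each arc, send the initial $t$ vertices to $u_1,\ldots,u_t$ (position $i$ to $u_i$) and the remaining $b_j$ vertices to $w_j$. Pairs at a common $u_i$ sit in different arcs at $C_h$-distance at least $|A_j|\ge t+1$, hence are non-adjacent in $C_h^t$; the closest pair across any two $w_j,w_{j'}$ is at $C_h$-distance exactly $t+1$, so any white edge of $K_B$ is safely accommodated; all remaining pairs lie across grey edges at white vertices and are unconstrained. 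This yields a valid embedding of $C_h^t$, a contradiction.

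For part (i), given a grey cycle $w_1,\ldots,w_\ell$ in $K_B$ with $\ceil{h/(t+r+1)}\le\ell\le\floor{h/t}$, the plan is to split on the value of $\ell$. If $\ell\ge\ceil{h/(t+1)}$, take arcs of sizes in $\{t,t+1\}$ summing to $h$ and send each entirely to its $w_j$; every arc has length $\ge t$, so for non-adjacent indices $j,j'$ on the grey cycle the closest $C_h$-pair between the corresponding arcs has distance $\ge t+1$, neutralising any white edge $w_jw_{j'}$. If instead $\ell\le\floor{h/(t+1)}$, use arcs of sizes in $[t+1,t+r+1]$ (feasible since $\ell(t+1)\le h\le\ell(t+r+1)$), splitting each arc into a white cushion of at most $r$ vertices sent to $u_1,\ldots,u_{c_j}$ and a black chunk of at most $t+1$ vertices sent to $w_j$; the condition $|A_j|\ge t+1$ simultaneously forces same-$u_i$ vertices in different arcs to be non-adjacent in $C_h^t$ and non-consecutive grey-cycle chunks to be more than $t$ apart on $C_h$. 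Together the two regimes cover the entire forbidden range and each yields an embedding, a contradiction. The main bookkeeping challenge, and the step I expect to require most care, is coordinating the two arc-length ranges so the regimes meet seamlessly at $\ell=\ceil{h/(t+1)}$; the hypothesis $h\ge t(t-1)$ appears exactly to guarantee the forbidden range is non-empty and that both constructions are simultaneously realisable throughout.
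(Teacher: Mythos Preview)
The paper does not give its own proof of this lemma: it is quoted verbatim as a result of Berikkyzy, Martin and Peck~\cite{berikkyzy2019edit} and is used as a black box throughout Section~3. So there is no in-paper argument to compare against.

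That said, your plan is correct and is almost certainly the approach taken in the original source: exhibit an explicit embedding $C_h^t\mapsto K$ by cutting $C_h$ into consecutive arcs, sending each arc (or its tail) to a black vertex and distributing any leftover head across the white vertices. Your verifications are right in outline. A couple of small points worth tightening:
\begin{itemize}
    \item In part~(iii) you only need $\chi(P_{h-t-1}^t)\le t+1$, which always holds; equality is irrelevant. The check that the induced subgraph of $C_h^t$ on $h-t-1$ consecutive vertices really is $P_{h-t-1}^t$ uses $h\ge 2t+2$, which you have.
    \item In part~(i), the case split $\ell\ge\lceil h/(t+1)\rceil$ versus $\ell\le\lfloor h/(t+1)\rfloor$ does cover the whole range, and your feasibility checks for the arc sizes go through under $h\ge 2t+2$ alone. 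The hypothesis $h\ge t(t-1)$ is \emph{not} needed for your construction to work; when it fails the forbidden interval either is empty or contains only values $\le 2$, in which case the statement is vacuous. So your final remark that this hypothesis is ``exactly'' what makes the two regimes mesh is not quite accurate---it appears to be a bookkeeping artefact of the source paper rather than a genuine obstruction.
    \item For the distance checks in parts~(i) and~(ii), the only case needing a moment's thought is $\ell=3$, where every pair of black vertices on the grey cycle is consecutive and hence joined by a grey edge; for $\ell\ge 4$ your ``at least one full arc in between in each direction'' argument is valid.
\end{itemize}
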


We will determine further characteristics of such a candidate CRG in the case that $(t+1) \mid h$ and $p < 1/\ceil{h/(2t+1)}$, with the aim of finding a contradiction and thus disproving its existence. We begin by determining the value of $\gamma_\cH(p)$ for small $p$.
\begin{lemma}\label{plem:gamma_function_for_small_p}
Let $t \geq 1$ and $h \geq t(2t+1)$ be integers with $(t+1) \mid h$. Then for $p \leq 1/\ceil{h/(2t+1)}$, we have
\[
    \gamma_\cH(p) = \frac{p(1-p)}{t(1-p) + \lb \ceil{\dfrac{h}{2t+1}}-1 \rb p}.
\]
\end{lemma}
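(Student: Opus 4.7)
The plan is to combine \cref{plem:gamma_function_general}(ii), which gives
\[
\gamma_\cH(p) = \min_{r \in \{0,\ldots,t\}} \frac{p(1-p)}{r(1-p) + (a_r - 1)p}, \qquad a_r := \ceil{h/(t+r+1)},
\]
with the observation that for small $p$ the minimum should be attained at $r = t$, giving exactly the expression in the statement. So I would aim to show that for every $r \in \{0, \ldots, t-1\}$ and every $p \in [0, 1/a_t]$,
\[
r(1-p) + (a_r - 1)p \;\leq\; t(1-p) + (a_t - 1)p,
\]
which rearranges to $(t-r)(1-p) \geq (a_r - a_t)p$.

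The key reduction is that the difference $(t-r)(1-p) - (a_r - a_t)p$, viewed as a function of $p$, is linear with value $t-r > 0$ at $p=0$ and negative slope $-(t-r)-(a_r - a_t) < 0$ (using $t > r$ and $a_r \geq a_t$, since $t+r+1 \leq 2t+1$). Hence it is nonnegative throughout $[0, 1/a_t]$ if and only if it is nonnegative at the right endpoint $p = 1/a_t$. Plugging in $p = 1/a_t$ and clearing denominators reduces the entire problem to the purely arithmetic claim
\[
a_r \leq (t-r+1)\,a_t - (t-r) \qquad \text{for every } r \in \{0, \ldots, t-1\}. \tag{$\ast$}
\]

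Verifying $(\ast)$ under the hypotheses $h \geq t(2t+1)$ and $(t+1) \mid h$ is the main obstacle. My approach is to substitute the elementary bounds $a_t \geq h/(2t+1)$ and $a_r \leq (h+t+r)/(t+r+1)$, which turn $(\ast)$ into the polynomial inequality $h(t-r)(t+r) \geq (2t+1)\bigl[(t-r)(t+r) + 2t\bigr]$, or equivalently
\[
h \;\geq\; (2t+1) + \frac{2t(2t+1)}{(t-r)(t+r)}.
\]
The worst case over $r \in \{0,\ldots,t-1\}$ is $r = t-1$, giving the threshold $(2t+1)(4t-1)/(2t-1)$. A short check shows that for $t \geq 2$ the hypothesis $h \geq t(2t+1)$ combined with $(t+1) \mid h$ forces $h$ to clear this threshold; for $t = 1$ the only remaining values are $h \in \{4,6,8\}$, which I would handle directly from the formulas $a_0 = h/2$ and $a_1 = \ceil{h/3}$. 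I expect the bookkeeping around the ceilings in this final arithmetic reduction to be the most delicate part, but once the two linear-in-$p$ inequalities have been peeled off, the rest is essentially a case check driven by the hypothesis on $h$.
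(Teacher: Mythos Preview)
Your proposal is correct and follows essentially the same approach as the paper: both reduce to comparing the denominators, observe that the resulting expression is linear and decreasing in $p$, evaluate at the endpoint $p = 1/\ceil{h/(2t+1)}$, and finish by bounding the ceilings crudely. The only cosmetic difference is that you isolate the clean arithmetic inequality $(\ast)$ and handle $t=1$ by a short direct check, whereas the paper closes via the chain $\tfrac{h}{2t+1} \geq t \geq \tfrac{(2t+1)(t-1)}{2t-1} \geq \tfrac{(t+r+1)(t-r-1)}{(t+r)(t-r)}$ without splitting into cases.
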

\begin{proof}
We would like to show that if $p \leq 1/\ceil{h/(2t+1)}$ then for every $r \in \{ 0, \dots, t-1 \}$ we have
\[
    \frac{p(1-p)}{r(1-p) + \lb \ceil{\frac{h}{t+r+1}}-1 \rb p} \geq \frac{p(1-p)}{t(1-p) + \lb \ceil{\frac{h}{2t+1}}-1 \rb p},
\]
that is, the minimum in \cref{plem:gamma_function_general}~\ref{item:gamma_function_general_2} is attained when $r = t$. By rearranging, we get that the inequality above is equivalent to showing that
\begin{equation}\label{eq:showing_minimised_at_ub_on_p}
    t-r + \lb \ceil{\frac{h}{2t+1}}- \ceil{\frac{h}{t+r+1}} - t + r \rb p \geq 0.
\end{equation}
Next, note that\begin{align*}
     \ceil{\frac{h}{2t+1}}- \ceil{\frac{h}{t+r+1}} - t + r &\leq \frac{h}{2t+1} +1 -\frac{h}{t+r+1} -t+r \\&= \frac{-h(t-r)}{(2t+1)(t+r+1)} -(t-r-1) < 0.
\end{align*}
Therefore, the left hand side of \eqref{eq:showing_minimised_at_ub_on_p} is a decreasing function of $p$, so is minimised at the upper bound on $p$, that is, when $p = 1/\ceil{h/(2t+1)}$. Combined with the fact that $r \leq t-1$ and by applying the crude upper and lower bounds on the ceiling function, we have
\begin{align}
    &t-r + \lb \ceil{\frac{h}{2t+1}}- \ceil{\frac{h}{t+r+1}} - t + r \rb p \geq t - r + 1 - \frac{\ceil{\frac{h}{t+r+1}}}{\ceil{\frac{h}{2t+1}}} - \frac{t-r}{\ceil{\frac{h}{2t+1}}} \nonumber\\
    % \geq \, &t - r + 1 - \frac{\frac{h}{t+r+1}+1}{\frac{h}{2t+1}} - \frac{t-r}{\frac{h}{2t+1}}\\
    \geq \, &t - r + 1 - \frac{h(2t+1)-(2t+1)(t+r+1)(t-r-1)}{h(t+r+1)} \label{eq:want_to_be_positive}
    %\geq \, &t - r + 1 - \frac{h(t+r+1)}{h(t+r+1)} - \frac{h(t-r) + (2t+1)(t+r+1) + (t-r)(2t+1)(t+r+1)}{h(t+r+1)}
\end{align}
%We require \eqref{eq:want_to_be_positive} to be at least 0 in order for \eqref{eq:showing_minimised_at_ub_on_p} to hold. 
Observe now that
\begin{align*}
     \frac{h}{2t+1} \geq t \geq \frac{(2t+1)(t-1)}{2t-1} \geq \frac{(t+r+1)(t-r-1)}{(t+r)(t-r)}
\end{align*}
where the first inequality holds by our assumption on $h$, the second by a straightforward manipulation, and the third since $r \in \{0,1,\dots, t-1\}$. By rearranging, we find that the right hand side of~\eqref{eq:want_to_be_positive} is non-negative, and therefore that~\eqref{eq:showing_minimised_at_ub_on_p} holds, as required. 
%In other words, by rearranging \eqref{eq:want_to_be_positive}, we show that we require
%\begin{align}
%h \geq \frac{(t+r+1)(t-r-1)(2t+1)}{(t+r)(t-r)}
%\end{align}
%Note that since $0 \leq r \leq t-1$, 
%\begin{align*}
%    \frac{(t+r+1)(t-r-1)(2t+1)}{(t+r)(t-r)} \leq \frac{(2t+1)^2(t-1)}{2t-1}.
%\end{align*}
%On the other hand, {$h \geq t(2t+1)$} and therefore, since
%\begin{align*}
%    t(2t+1)-\frac{(2t+1)^2(t-1)}{2t-1} &= \frac{(2t^2+t)(2t-1)-(2t+1)^2(t-1)}{2t-1}\\
%    &=\frac{4t^3+2t^2-2t^2 -t-(4t^3+4t^2+t-4t^2-4t-1)}{2t-1} = \frac{2t+1}{2t-1} > 0,
%\end{align*}
% whenever $t\geq 1$. Therefore, whenever {$h \geq t(2t+1)$}, \eqref{eq:want_to_be_positive} is at least $0$, and therefore, \eqref{eq:showing_minimised_at_ub_on_p} holds, and thus the lemma holds.  
\end{proof}
Recall that for each pair of vertices $v, w \in V(K)$, $\boldx(v)$ is the weight assigned to the vertex $v$ by optimal vector $\boldx$ found by the quadratic program $g_K(p)$, $d_G(v)$ is the sum of the weights of all vertices incident to $v$ via a grey edge, and
$\text{deg}_G(v)$ is the number of vertices adjacent to $V$ via a grey edge. We define $d_G^W(v)$ be the sum of the weights of white vertices adjacent to $v$ via a grey edge, and let $d_G^B(v)$ be the analogous quantity for black vertices. Additionally, let $\text{deg}_G^B(v)$ be the number of black vertices adjacent to $v$ via a grey edge. We further define $d_G(v,w)$ to be the weight of the common grey neighbourhood of $v$ and $w$, and extend the definitions given above to common neighbourhoods.  Recall also that for a set of vertices $S$, $\boldx(S)$ is the sum of weights of vertices in that set. In the following proposition, we state some important properties which the vertices of a candidate CRG $K$ must satisfy. 
\begin{proposition}\label{pprop:black_vertex_properties}Let $t \geq 1$ and $h \geq t(2t+1)$ be integers with $(t+1) \mid h$. Let $K$ be a $p$-core candidate CRG, for $p \leq 1/\ceil{h/(2t+1)}$ with $g_K(p) = \gamma_\cH(p)- \eps$ for some $\eps >0$. Suppose that $K$ has $r$ white vertices, for $r \in \{ 0, \dots, t-1\}$. Then for every $v\in \VB(K)$ the following statements hold.
\begin{enumerate}
    \item\label{pitem:black_vertex_properties_overall_weighted_degree} $d_G(v) = \dfrac{t-1+\lb \ceil{\frac{h}{2t+1}}-t \rb p}{t+ \lb \ceil{\frac{h}{2t+1}}-t-1 \rb p} + \dfrac{\eps}{p} +\dfrac{1-2p}{p}\boldx(v)$.
    \item\label{pitem:black_vertex_properties_black_weighted_degree} $d_G^B(v) = \dfrac{t-r-1 + \lb \ceil{\frac{h}{2t+1}}-t+r \rb p}{t+ \lb \ceil{\frac{h}{2t+1}}-t-1 \rb p} + \dfrac{(r+1)\eps}{p} + \dfrac{1-2p}{p}\boldx(v)$.
    \item\label{pitem:black_vertex_properties_weight} $\boldx(v) \leq \dfrac{p}{t + \lb \ceil{ \frac{h}{2t+1}} - t - 1 \rb p  } - \dfrac{\eps}{1-p}$.
    \item\label{pitem:black_vertex_properties_black_unweighted_degree} $\deg_G^B(v) > \lb \ceil{\frac{h}{2t+1}} -1 \rb \lb t-r \rb$.
    % \item\label{pitem:black_pair_common_neighbourhood} $\deg_G^B(v,w) \geq \ceil{\dfrac{t-r-2 -rp}{p}}$
\end{enumerate}
\end{proposition}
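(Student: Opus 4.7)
For parts (i)--(iii) the plan is to combine the symmetrisation results of Lemma~\ref{lem:symmetrisation} and Lemma~\ref{lem:upper_bound_on_black_vertex_weight} with the explicit value $\gamma_\cH(p) = p(1-p)/[t + (\ceil{h/(2t+1)} - t - 1)p]$ from Lemma~\ref{plem:gamma_function_for_small_p}, substituting $g_K(p) = \gamma_\cH(p) - \eps$ throughout. For (i), starting from $d_G(v) = (p - g_K(p))/p + (1-2p)\boldx(v)/p$ in Lemma~\ref{lem:symmetrisation}, I would peel off the $\eps/p$ term and simplify $(p - \gamma_\cH(p))/p = 1 - (1-p)/[t + (\ceil{h/(2t+1)} - t - 1)p]$ to the claimed fraction. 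For (ii), I would first observe that by Theorem~\ref{thm:characterisation_of_p_core_crgs} every edge of $K$ meeting a white vertex must be grey (white edges at white vertices are forbidden, and $\EB(K) = \emptyset$), while each white vertex carries weight $g_K(p)/p$ by Lemma~\ref{lem:symmetrisation}; hence $d_G^W(v) = r g_K(p)/p$, and $d_G^B(v) = d_G(v) - d_G^W(v)$ collapses to the claimed expression. Part (iii) is immediate from Lemma~\ref{lem:upper_bound_on_black_vertex_weight} after the same substitution.

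For (iv) the plan is an averaging argument over the black grey-neighbours of $v$. Setting $\ell = \ceil{h/(2t+1)}$ for brevity, every such neighbour has weight at most $p/[t + (\ell - t - 1)p] - \eps/(1-p)$ by part (iii), and by part (ii) combined with the fact that $(1-2p)\boldx(v)/p \geq 0$ (valid since $p \leq 1/\ell \leq 1/2$),
\[
d_G^B(v) \; \geq \; \frac{t - r - 1 + (\ell - t + r)p}{t + (\ell - t - 1)p} + \frac{(r+1)\eps}{p}.
\]
Since $d_G^B(v) \leq \deg_G^B(v) \cdot \max_{w \in \VB(K)} \boldx(w)$, it suffices to show that this lower bound on $d_G^B(v)$ strictly exceeds $(\ell - 1)(t - r)$ times the upper bound on the maximum black weight. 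A short calculation reduces the $\eps$-free part of this difference to $(t - r - 1)(1 - \ell p)/[t + (\ell - t - 1)p] \geq 0$, using $r \leq t - 1$ and $p \leq 1/\ell$, while the remaining $\eps$-contribution $(r+1)\eps/p + (\ell - 1)(t - r)\eps/(1-p)$ is strictly positive (since $r + 1 \geq 1$ and $\ell \geq 2$, the latter from $h \geq 2(2t+1)$). Dividing by the maximum black weight, which is positive as $\boldx$ has no zero entries in a $p$-core CRG, then yields $\deg_G^B(v) > (\ell - 1)(t - r)$.

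The main obstacle is extracting the strict inequality in (iv): at the extreme cases $r = t - 1$ and $p = 1/\ceil{h/(2t+1)}$, the $\eps$-free part of the chain collapses to equality, so strictness must be supplied entirely by the two $\eps$-terms in the difference---one arising from the lower bound on $d_G^B(v)$ via part (ii), the other from the upper bound on the black weights via part (iii)---which fortunately both push the inequality in the correct direction. Once this is arranged, (iv) follows cleanly, and (i)--(iii) reduce to direct algebraic manipulations of the formulas furnished by Lemmas~\ref{lem:symmetrisation}, \ref{lem:upper_bound_on_black_vertex_weight}, and \ref{plem:gamma_function_for_small_p}.
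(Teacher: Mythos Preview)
Your proposal is correct and, for parts~(i), (ii), and (iv), follows essentially the same route as the paper: substitute $g_K(p)=\gamma_\cH(p)-\eps$ into Lemma~\ref{lem:symmetrisation}, subtract the total white weight $r\cdot g_K(p)/p$ to pass from $d_G$ to $d_G^B$, and then use averaging against the black-weight upper bound to extract the degree inequality, with the $\eps$-terms supplying strictness at the boundary case $r=t-1$, $p=1/\ceil{h/(2t+1)}$.

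The one genuine difference is in part~(iii). You obtain it in one line from Lemma~\ref{lem:upper_bound_on_black_vertex_weight} by substituting $g_K(p)=\gamma_\cH(p)-\eps$ into $\boldx(v)\leq g_K(p)/(1-p)$. The paper instead rederives the same bound from scratch via the constraint $\boldx(\VB)\geq \boldx(v)+d_G^B(v)$ combined with $\boldx(\VB)=1-\boldx(\VW)$ and part~(ii), solving for $\boldx(v)$. Both arguments land on exactly the same inequality; yours is shorter and avoids the intermediate dependence on $r$ (which cancels anyway in the paper's calculation), while the paper's version has the mild advantage of being self-contained given (i) and (ii).
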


\begin{proof}
Let $v \in \VB(K)$. To prove \ref{pitem:black_vertex_properties_overall_weighted_degree}, we apply \cref{lem:symmetrisation} to get that
\begin{align*}
    d_G(v) &= \frac{p-g_K(p)}{p} + \frac{1-2p}{p}\boldx(v) = 1 - \frac{\gamma_\cH(p)-\eps}{p} + \frac{1-2p}{p}\boldx(v)\\
        &= 1 - \frac{1-p}{t+ \lb \ceil{\frac{h}{2t+1}}-t-1 \rb p} + \frac{\eps}{p} +\frac{1-2p}{p}\boldx(v)= \frac{t-1+\lb \ceil{\frac{h}{2t+1}}-t \rb p}{t+ \lb \ceil{\frac{h}{2t+1}}-t-1 \rb p} + \frac{\eps}{p} +\frac{1-2p}{p}\boldx(v).
\end{align*}
Here, the third equality holds by applying \cref{plem:gamma_function_for_small_p}, since {$h \geq t(2t+1)$} and $p \leq 1/\ceil{h/(2t+1)}$.

To prove \ref{pitem:black_vertex_properties_black_weighted_degree}, we note that since $K$ is $p$-core for $p \leq 1/2$, every edge incident to a white vertex in $K$ is grey by \cref{thm:characterisation_of_p_core_crgs}. Furthermore, by \cref{lem:symmetrisation}, for each white vertex $u \in \VW(K)$, we have
\begin{equation}\label{eq:weight_of_white_vertex}
\boldx(u) = \frac{g_K(p)}{p} = \frac{1-p}{t+ \lb \ceil{\frac{h}{2t+1}}-t-1 \rb p} - \frac{\eps}{p}.
\end{equation}
Here again the second equality holds by an application of \cref{plem:gamma_function_for_small_p}. Then $d_G^B(v) = d_G(v) - d_G^W(v)=d_G(v) - \boldx(\VW)$, and we obtain the result by observing that $\boldx(\VW)$ is $r$ times the bound given in \eqref{eq:weight_of_white_vertex}, and subtracting this from the bound on $d_G(v)$ given in \ref{pitem:black_vertex_properties_overall_weighted_degree}.

To prove \ref{pitem:black_vertex_properties_weight}, we note that $\boldx(\VB) \geq \boldx(v) + d_G^B(v)$. On the other hand, we also have $\boldx(\VB) = 1 - \boldx(\VW)$. Thus, combining these gives
\begin{align*}
    \boldx(v) \leq \,\, &1 - \boldx(\VW) - d_G^B(v)\\
        =\,\, &1 - \frac{r(1-p)}{t+ \lb \ceil{\frac{h}{2t+1}}-t-1 \rb p}+\frac{r\eps}{p} - \frac{t-r-1 + \lb \ceil{\frac{h}{2t+1}}-t+r \rb p}{t+ \lb \ceil{\frac{h}{2t+1}}-t-1 \rb p} - \frac{(r+1)\eps}{p} - \frac{1-2p}{p} \boldx(v)\\
        =\,\, &\frac{1-p}{t+ \lb \ceil{\frac{h}{2t+1}}-t-1 \rb p} - \frac{\eps}{p} - \frac{1-2p}{p} \boldx(v).
\end{align*}
Rearranging and solving for $\boldx(v)$ gives the result.

Finally, to prove \ref{pitem:black_vertex_properties_black_unweighted_degree}, we have
\begin{align*}
    \text{deg}_G^B(v) &\geq \ceil{\frac{d_G^B(v)}{\max_{u \in \VB} \boldx(u)}} > \frac{t-r-1 + \lb \ceil{\frac{h}{2t+1}}-t+r \rb p}{p} \\
    &\geq \lb \ceil{\frac{h}{2t+1}}-1 \rb \lb t-r \rb,
\end{align*}
where the second inequality holds by \ref{pitem:black_vertex_properties_black_weighted_degree} and \ref{pitem:black_vertex_properties_weight} and the final inequality because $p \leq \frac{1}{\ceil{h/(2t+1)}}$.
% Finally, to prove \ref{pitem:black_pair_common_neighbourhood}, we observe
% $\boldx(VK) \geq d_G^B(v) + d_G^B(w)-d_G^B(v,w)$. Therefore, by applying \ref{pitem:black_vertex_properties_weight}, we have
% %
% \begin{align*}
%     d_G^B(v,w) \geq d_G^B(v) + d_G^B(w) - 1 + \boldx(\VW) > \frac{t-r-2 +rp}{t+ \lb \ceil{\dfrac{h}{2t+1}}-t-1 \rb p}.
% \end{align*}
% %
% Therefore,
% %
% \[
% \deg_G^B(v,w) \geq \ceil{\frac{d_G^B(v,w)}{\max_{u \in \VB} \boldx(u)}} > \ceil{\frac{t-r-2+rp}{p}}.
% \]
\end{proof}

Note that when applying this result, we will usually ignore the exact value of $\eps$ and rely only on the fact that $g_K(p) < \gamma_\cH(p)$. Finally, we state the following fact, which follows immediately from Cases $2$ and $3$ in the proof of Theorem 3 in the work of Berikkyzy, Martin, and Peck \cite{berikkyzy2019edit}. This allows us to now restrict to the case when the candidate CRG contains exactly $t-1$ vertices.

\begin{lemma}\label{lem:white_vxs}
Let $p \in (0, 1/2]$\ and let $t \geq 1$ and $h \geq 2t(t+1)+1$. If $K$ is a $p$-core CRG with $g_K(p) < \gamma_\cH(p)$, that is, a candidate CRG, then $K$ has exactly $t-1$ white vertices.
\end{lemma}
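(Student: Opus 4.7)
My plan is to split by the number $r$ of white vertices of $K$ and rule out every value other than $r=t-1$. Since candidate CRGs are vacuous outside the regime $(t+1)\mid h$ and $p<1/\lceil h/(2t+1)\rceil\le 1/2$ (by \cref{pthm:berikkyzy_powers_of_cycles} together with \cref{plem:gamma_function_for_small_p}), I would work throughout in that regime. The three cases to exclude are $r\ge t+1$, $r=t$, and $r\le t-2$, corresponding to the three bullets of \cref{lem:forbidden_cycles} outside the desired one.

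First, for $r\ge t+1$, \cref{lem:forbidden_cycles}(iii) yields $(t+1)\nmid h$, which immediately contradicts our regime. (In the general setting without $(t+1)\mid h$, one could alternatively note that $V(K_B)=\emptyset$, so $K=K(r,0)$ and \cref{lemma:calulate_g_of_K_r_s} gives $g_K(p)=p/r\le p/(t+1)\le \gamma_\cH(p)$ by \cref{plem:gamma_function_general}(i), contradicting candidacy.) Next, for $r=t$, \cref{lem:forbidden_cycles}(ii) gives $s:=|V(K_B)|\le \lceil h/(2t+1)\rceil-1$. Because $K$ is $p$-core with $p\le 1/2$, \cref{thm:characterisation_of_p_core_crgs} rules out black edges and white edges touching white vertices, so the only non-grey edges of $K$ are white edges inside $K_B$. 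Replacing any such edge by a grey one changes its entry in $\boldM_K(p)$ from $p$ to $0$ and so can only decrease the quadratic-program optimum; therefore $g_K(p)\ge g_{K(t,s)}(p)$. Combining this with monotonicity of \cref{lemma:calulate_g_of_K_r_s} in $s$ and with \cref{plem:gamma_function_for_small_p}, we obtain $g_K(p)\ge g_{K(t,\lceil h/(2t+1)\rceil-1)}(p)=\gamma_\cH(p)$, contradicting candidacy.

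The case $r\le t-2$ is, I expect, the main obstacle. My idea is to play off two opposing constraints. On the one hand, \cref{pprop:black_vertex_properties}(iv) forces every black vertex $v$ to satisfy the strong grey-black degree bound $\deg_G^B(v)>(\lceil h/(2t+1)\rceil-1)(t-r)\ge 2(\lceil h/(2t+1)\rceil-1)$. On the other hand, \cref{lem:forbidden_cycles}(i) forbids every grey cycle in $K_B$ of length in $\{\lceil h/(t+r+1)\rceil,\ldots,\lfloor h/t\rfloor\}$, an interval which under the hypothesis $h\ge 2t(t+1)+1$ is quite wide. A classical cycle-forcing argument (in the spirit of \Erdos--Gallai or pancyclicity-type results) applied to the dense grey subgraph induced on $V(K_B)$ should then produce a grey cycle whose length lands in this forbidden window, yielding the contradiction. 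Making this cycle-forcing quantitative enough to cover every $r\in\{0,1,\ldots,t-2\}$ uniformly, and in particular verifying that the degree bound truly drives the minimum cycle length into the forbidden interval rather than just past it, is the technical heart of the proof and is essentially the content of the Berikkyzy, Martin and Peck cases to which the paper appeals.
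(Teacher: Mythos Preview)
Your proposal is correct and in fact more detailed than the paper's own treatment: the paper does not give a self-contained proof of this lemma at all, but simply states that it ``follows immediately from Cases~$2$ and~$3$ in the proof of Theorem~3'' of Berikkyzy, Martin and Peck. Your handling of the cases $r\ge t+1$ and $r=t$ via \cref{lem:forbidden_cycles} together with the monotonicity argument through \cref{lemma:calulate_g_of_K_r_s} and \cref{plem:gamma_function_for_small_p} is clean and valid, and your reduction to the regime $(t+1)\mid h$, $p<1/\lceil h/(2t+1)\rceil$ is exactly what the paper itself asserts just before \cref{lem:forbidden_cycles}. For the remaining case $r\le t-2$ you correctly identify that the substantive work---turning the degree bound of \cref{pprop:black_vertex_properties}(iv) and the forbidden-length window of \cref{lem:forbidden_cycles}(i) into an actual grey cycle of forbidden length---is precisely what the cited Berikkyzy--Martin--Peck cases provide, and you appropriately defer to them rather than attempting to reproduce that argument.
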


\subsection{Existence of long cycles}

Before stating and proving the key lemmas in this section, we define some useful notation relating to paths and cycles. We will define these in terms of labelled paths, and remark that the corresponding definitions also apply for cycles. We say the \emph{length} of a path $P$ is the number of edges in $P$, and we denote this by $\size{P}$. If $P$ is a labelled path, say $P=v_1 \, \dots \, v_\ell$, then the \emph{successor} of a vertex $v_i$ on $P$ is the vertex $v_{i+1}$. The \emph{predecessor} of $v_i$ on $P$ is the vertex $v_{i-1}$. For indices $1 \leq i < j \leq \ell$, the subpath $v_i \, P \, v_j$ is the path $v_i \, v_{i+1} \, \dots \, v_{j-1} \, v_j$, that is, the path which has initial vertex $v_i$ and final vertex $v_j$ and follows the labelling of $P$. The path $v_j \, P^{-1} \, v_i$ is the path which has initial vertex $v_j$, final vertex $v_i$ and follows the reverse of the labelling of $P$. We also recall that for a set $S \subseteq V(K)$, the value $\boldx(S)$ denotes the sum of weights of vertices in that set, that is, $\boldx(S) = \sum_{v \in S} \boldx(v)$. We now prove a lemma which shows that for sufficiently small values of $p$, every $p$-core candidate CRG which has $t-1$ white vertices contains a grey cycle of length at least $\ceil{h/2t}$ in the subgraph induced on the black vertices. Recall that $K_B = K[\VB]$.
\begin{lemma}\label{plem:long_cycle_exists}
Let $t \geq 1$, and {$h \geq 4t(2t+1) \}$} be integers with $(t+1) \mid h$. Let $K$ be a $p$-core candidate CRG for some $p \leq 1/\ceil{h/(2t+1)}$, and suppose that $K$ contains $t-1$ white vertices. Then $K_B$ contains a grey cycle of length at least $\ceil{h/2t}$.
\end{lemma}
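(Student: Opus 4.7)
The plan is to proceed by contradiction, leveraging \cref{pprop:black_vertex_properties} to obtain sharp bounds on the black grey-degree and the size of $\VB$, then running a longest-cycle argument together with an insertion step. Write $\delta := \ceil{h/(2t+1)}$ and $\ell := \ceil{h/(2t)}$, so the goal is to produce a grey cycle of length at least $\ell$ in $K_B$.

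First I would collect the needed preliminary estimates. Applying \cref{pprop:black_vertex_properties}~\ref{pitem:black_vertex_properties_black_unweighted_degree} with $r = t-1$ gives $\deg_G^B(v) \geq \delta$ for every $v \in \VB$. Combining \cref{pprop:black_vertex_properties}~\ref{pitem:black_vertex_properties_weight} with the white-vertex weight formula $\boldx(w) = (1-p)/M - \eps/p$ and the identity $\boldx(\VB) + \boldx(\VW) = 1$, where $M := t + (\delta - t - 1)p$, a short calculation yields $\boldx(\VB) \geq (1 + (\delta - 2)p)/M$ and $\max_{v \in \VB}\boldx(v) \leq p/M$ (both up to an $O(\eps)$ error). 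Dividing these bounds and using $1/p \geq \delta$ gives $\size{\VB} \geq 2\delta - 2$; a direct arithmetic check from $h \geq 4t(2t+1)$ shows $2\delta - 2 \geq \ell$.

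Next, let $C = u_1 u_2 \ldots u_c u_1$ be a longest grey cycle in $K_B$ (which exists since $\delta \geq 3$), and suppose for contradiction that $c < \ell$. Since $\size{\VB} \geq \ell > c$, there is some $v \in \VB \setminus V(C)$. The key observation is that $v$ cannot be grey-adjacent to two consecutive vertices $u_i, u_{i+1}$ of $C$, for otherwise inserting $v$ between them would yield the grey cycle $u_1 \ldots u_i v u_{i+1} \ldots u_c u_1$ of length $c+1$, contradicting the maximality of $C$. Hence the grey neighbors of $v$ on $V(C)$ form a cyclically independent subset of $V(C)$ and so number at most $\floor{c/2} \leq (\ell - 1)/2$. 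A routine calculation using $h \geq 4t(2t+1)$ gives $(\ell - 1)/2 < \delta$, so $v$ must have at least one grey neighbor in $\VB \setminus V(C)$.

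Finally, I would convert this local extension into a global contradiction via a path-rotation argument. Taking a longest grey path $P = v_1 \ldots v_m$ in $K_B$ with $v_1 \in \VB \setminus V(C)$, the maximality of $P$ forces all grey $K_B$-neighbors of $v_1$ (and of $v_m$, if $v_m \in \VB \setminus V(C)$) to lie on $V(P)$. A P\'osa-type analysis of the index sets $\lcb i : v_1 v_{i+1} \in \EG \rcb$ and $\lcb i : v_m v_i \in \EG \rcb$ produces either a Hamilton cycle on $V(P)$ or else forces $m \geq 2\delta + 1 \geq \ell + 1$, from which one extracts a grey cycle of length at least $\ell$ by rotation. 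In the case that a Hamilton cycle on $V(P)$ has length $m < \ell$, we iteratively apply the insertion step from the previous paragraph using vertices of $\VB \setminus V(P)$, extending the cycle one vertex at a time; the inequality $(\ell - 1)/2 < \delta$ guarantees that at every intermediate cycle length less than $\ell$ an insertion is available, so the cycle grows to length at least $\ell$, again a contradiction. The main obstacle is handling this rotation/insertion machinery cleanly; the sharp inequalities $2\delta \geq \ell$ and $(\ell-1)/2 < \delta$ established above are exactly the inputs that prevent the extension procedure from stalling below length $\ell$.
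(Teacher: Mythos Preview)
Your preliminary estimates are fine, but the final paragraph has a genuine gap: after extracting the unweighted bound $\deg_G^B(v)\geq \delta$ and $\size{\VB}\geq \ell$, you are effectively trying to prove the purely combinatorial statement ``a graph on at least $\ell$ vertices with minimum degree $\delta$ (where $\delta<\ell\leq 2\delta$) contains a cycle of length at least $\ell$''. This is false. Take two copies of $K_{\delta+1}$ sharing a single vertex: this graph has $2\delta+1$ vertices, minimum degree $\delta$, and longest cycle of length $\delta+1$. For $t=1$, $h=12$ one has $\delta=4$ and $\ell=6$, and the longest cycle here has length $5<\ell$. In this example the longest path has $m=2\delta+1$ vertices and no Hamilton cycle on $V(P)$, so you land in your Case~2, where the assertion ``from which one extracts a grey cycle of length at least $\ell$ by rotation'' simply fails: every rotation endpoint stays inside one clique, and the cycles produced have length at most $\delta+1$. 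Your other branch is also broken: if $V(P)$ carries a Hamilton cycle and $m<\ell$, then any $v\in\VB\setminus V(P)$ with a grey neighbour on $V(P)$ would yield a path longer than $P$, so in fact $V(P)$ is a grey component of $K_B$ and no insertion is possible at all; the inequality $(\ell-1)/2<\delta$ does not help, because it bounds the number of neighbours of $v$ \emph{on} the cycle only once you already know they lie there.

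The paper's argument avoids this by never discarding the weights. It also starts with a longest grey path $P=v_1v_2\ldots$ and confines the grey $K_B$-neighbourhood of $v_1$ to $Q=\{v_2,\ldots,v_{\ell-1}\}$, but then it works with $\boldx(Q)$ and $d_G^B$ rather than with $\deg_G^B$. Using the upper bound on black weights and the lower bound on $d_G^B(v_1)$ it bounds the weight of the set $Q'$ of predecessors of $v_1$'s neighbours, and by averaging locates a rotation endpoint $u\in Q'$ with $\boldx(u)$ bounded below. The crucial step is then to feed this large $\boldx(u)$ back into the weighted degree formula $d_G^B(u)>(\ldots)+\tfrac{1-2p}{p}\boldx(u)$; because $u$'s grey $K_B$-neighbourhood is also trapped in $Q\cup\{v_1\}$, comparing this amplified lower bound with the trivial upper bound on $\boldx(Q\cup\{v_1\})$ forces a lower bound on $p$ that contradicts $p\leq 1/\delta$. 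The $\tfrac{1-2p}{p}\boldx(v)$ term is exactly the leverage that your unweighted reduction throws away, and without it the argument cannot close.
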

\begin{proof}
We would like to find a grey cycle of length at least $\ceil{h/2t}$. Recall by \cref{thm:characterisation_of_p_core_crgs}
that all edges of $K_B$ are either grey or white. Assume for a contradiction that the longest grey cycle in $K_B$ has length at most $\ceil{h/2t}-1$. By \cref{pprop:black_vertex_properties}~\ref{pitem:black_vertex_properties_black_weighted_degree}, for each vertex $v \in \VB$, we have that 
\begin{align*}
    d_G^B(v) > \frac{\lb \ceil{\frac{h}{2t+1}}-1\rb p}{t+\lb \ceil{\frac{h}{2t+1}}-t-1\rb p} + \frac{1-2p}{p}\boldx(v).
\end{align*}
Let $P$ be a longest grey path in $K_B$. Let $P = v_1 \ldots v_\ell$ so that $v_1$ and $v_\ell$ are the endvertices of $P$ and $v_{i}v_{i+1}$ are edges of $P$ for each $i \in [\ell-1]$. Note that by the maximality of $P$, each grey neighbour of $v$ in $\VB$ must lie on $P$. Let $Q = \{v_2, \ldots, v_m\}$ where $m$ is the largest index in $[\ell]$ such that $m \leq \ceil{h/2t}$ (so $Q \cup \{v_1\}$ is the set of the first $m+1$ vertices of $P$). The aim is to show that $v_1$ must have a grey neighbour in $\VB$ which lies outside $Q$, and this will give us a cycle of length at least $\ceil{h/2t}$. So we assume that $\size{Q} = \ceil{h/2t}-2$, that is $Q = \{v_2, \ldots, v_{\ceil{h/2t}-1}\}$. 

Consider $b \in \{2, \ldots, \ceil{h/2t}-1\}$ such that $v_1 v_b$ is an edge. Then by applying a rotation, we can find a grey path $P_b =  v_{b-1} P^{-1} v_1 v_b P v_\ell$ which has initial vertex $v_{b-1}$ and has the same length as $P$. By the maximality of $P$, the path $P_b$ must also be a longest grey path in $K_B$ and so the grey neighbourhood in $\VB$ of $v_{b-1}$ must also lie entirely on $P_b$. Furthermore, note that the set of vertices $Q \cup \{v_1\}$ are still the first $\ceil{h/2t}-1$ vertices of the path $P_b$. Therefore, if $v_{b-1}$ had a grey neighbour on $P$ outside $Q \cup \{v_1\}$, we would find a cycle of length at least $\ceil{h/2t}$, as required. So, we may assume that the grey neighbourhood of $v_{b-1}$ in $\VB$ lies entirely in the set $Q \cup \{v_1\}$. In particular, the set of the first $\ceil{h/(2t)}-1$ vertices of $P$ is the same as the set of the first $\ceil{h/2t}-1$ vertices of $P'$, albeit that they have been rearranged.

We know by \cref{pprop:black_vertex_properties}~\ref{pitem:black_vertex_properties_black_unweighted_degree} that $v_1$ has at least $\ceil{h/(2t+1)}$ grey neighbours in $\VB$. Say $v_1$ has exactly $\ceil{h/(2t+1)}+y$ grey neighbours in $\VB$ for some $y \in \{0, \ldots, \ceil{h/2t}-\ceil{h/(2t+1)}-2\}$. Let $Q'$ be the set of vertices in $Q$ which are predecessors in $P$ of grey neighbours of $v_{1}$, that is, the set of vertices $v_j \in V(P)$ for which $v_1 v_{j+1}$ is a grey edge and $j \in Q$. Then $\size{Q'}=\ceil{h/(2t+1)}+y-1$ (since $v_1$ itself is a predecessor of $v_2$ on $P$, but $v_1$ does not lie in $Q$). Let $Q'' = Q \setminus Q'$. Then $\size{Q''} = \ceil{h/2t}-\ceil{h/(2t+1)}-y-1$. Now by the upper bound on the weight of a black vertex given in \cref{pprop:black_vertex_properties}~\ref{pitem:black_vertex_properties_weight}, we obtain that
\begin{equation*}
 \boldx(Q) \leq \frac{\lb \ceil{\frac{h}{2t}}-2 \rb p}{t + \lb \ceil{\frac{h}{2t+1}}-t-1 \rb p}.
\end{equation*}
On the other hand, since $\boldx(Q) \geq d_G^B(v_1)$, we have 
\begin{equation*}
    \boldx(Q) > \frac{\lb \ceil{\frac{h}{2t+1}}-1 \rb p }{t + \lb \ceil{\frac{h}{2t+1}}-t-1 \rb p} + \frac{1-2p}{p} \boldx(v_1).
\end{equation*}
Furthermore, again by the upper bound on the weight of a black vertex, we have 
\begin{equation*}
    \boldx(Q'') \leq \frac{\lb \ceil{\frac{h}{2t}}-\ceil{\frac{h}{2t+1}}-y-1 \rb p}{t + \lb \ceil{\frac{h}{2t+1}}-t-1 \rb p}.
\end{equation*}
Therefore, since $\boldx(Q') = \boldx(Q) - \boldx(Q'')$, we have
\begin{equation*}
    \boldx(Q') > \frac{\lb 2\ceil{\frac{h}{2t+1}}-\ceil{\frac{h}{2t}} +y \rb p }{t + \lb \ceil{\frac{h}{2t+1}}-t-1 \rb p} + \frac{1-2p}{p} \boldx(v_1).
\end{equation*}
Therefore, by averaging over all vertices in $Q'$, we find that there exists some vertex $u \in Q'$ such that
\begin{equation*}
    \boldx(u) > \frac{\lb 2\ceil{\frac{h}{2t+1}}-\ceil{\frac{h}{2t}} +y \rb p }{\lb \ceil{\frac{h}{2t+1}}+y-1 \rb \lb t + \lb \ceil{\frac{h}{2t+1}}-t-1 \rb p \rb} + \frac{1-2p}{\lb \ceil{\frac{h}{2t+1}}+y-1 \rb p} \boldx(v_1).
\end{equation*}
Since {$h \geq 4t(2t+1)$}, we have that 
\begin{equation*}
    \boldx(u) > \frac{\lb 2\ceil{\frac{h}{2t+1}}-\ceil{\frac{h}{2t}} \rb p }{\lb \ceil{\frac{h}{2t+1}}-1 \rb \lb t + \lb \ceil{\frac{h}{2t+1}}-t-1 \rb p \rb} + \frac{1-2p}{\lb \ceil{\frac{h}{2t}}-3 \rb p} \boldx(v_1).
\end{equation*}
This bound holds because $(x + y)/(x'+y) \geq x/x'$ whenever $x' \geq x$ and $y \geq 0$. So, if we let $x = 2 \ceil{h/(2t+1)}-\ceil{h/2t}$ and $x' = \ceil{h/(2t+1)}-1$, then it suffices to have $x \leq x'$, and in order for this to be true, it suffices to have $h \geq 4t(2t+1)$. Now note that
\begin{equation}\label{eq:upper_bound_on_qv1}
    \boldx(Q \cup \{ v_1 \}) \leq \frac{\lb \ceil{\frac{h}{2t}}-2 \rb p}{t + \lb \ceil{\frac{h}{2t+1}}-t-1 \rb p} + \boldx(v_1).
\end{equation}
On the other hand, since $u \in Q'$, it is the predecessor on $P$ to some grey neighbour of $v_1$ and so as we have seen already, the entire grey neighbourhood of $u$ in $K_B$ must lie in $Q \cup \{v_1\}$. Therefore, 
\begin{align}
    &\,\,\boldx(Q \cup \{v_1\}) > \frac{\lb \ceil{\frac{h}{2t+1}}-1 \rb p}{t + \lb \ceil{\frac{h}{2t+1}}-t-1 \rb p} + \frac{1-p}{p}\boldx(u) \nonumber\\
&> \frac{\lb \ceil{\frac{h}{2t+1}}-1 \rb p}{t + \lb \ceil{\frac{h}{2t+1}}-t-1 \rb p} + \frac{\lb 2\ceil{\frac{h}{2t+1}}-\ceil{\frac{h}{2t}} \rb (1-p) }{\lb \ceil{\frac{h}{2t+1}}-1 \rb \lb t + \lb \ceil{\frac{h}{2t+1}}-t-1 \rb p \rb} + \frac{(1-2p)(1-p)}{\lb \ceil{\frac{h}{2t}}-3 \rb p^2} \boldx(v_1) \nonumber\\
&= \frac{2\ceil{\frac{h}{2t+1}}-\ceil{\frac{h}{2t}} + \lb \ceil{\frac{h}{2t+1}}^2 -4\ceil{\frac{h}{2t+1}}+\ceil{\frac{h}{2t}}+1 \rb p}{\lb \ceil{\frac{h}{2t+1}}-1 \rb \lb t + \lb \ceil{\frac{h}{2t+1}}-t-1 \rb p \rb} + \frac{(1-2p)(1-p)}{\lb \ceil{\frac{h}{2t}}-3 \rb p^2} \boldx(v_1). \label{eq:bound_in_terms_of_v_1}
\end{align}
Now we prove the following claim.
\begin{claim}\label{pclaim:long_cycles}
If $t \geq 1$, $h \geq 4(2t+1)$ and $p \leq 1/\ceil{h/(2t+1)}$, we have
\[
    \frac{(1-2p)(1-p)}{\lb \ceil{\frac{h}{2t}}-3 \rb p^2} \geq 1.
\]
\end{claim}
\begin{proof}[Proof of \cref{pclaim:long_cycles}]
We would like to show that when $p \leq 1/\ceil{h/(2t+1)}$, we have $(1-2p)(1-p)/p^2 \geq \ceil{h/2t}-3$. It suffices to show that whenever $p \leq (2t+1)/h$, we have
\[
    \frac{1-3p}{p^2} \geq \frac{h}{2t}-2.
\]
Since the left side of the equation above is decreasing in $p$ in the interval $(0,2/3)$, it suffices to show that this holds for the upper bound on $p$, that is, it suffices to show
\begin{equation}\label{eq:bound_for_claim3.6}
1-3\lb \frac{2t+1}{h} \rb \geq \lb \frac{2t+1}{h} \rb^2 \lb  \frac{h}{2t}-2 \rb.
\end{equation}
Now suppose that $h = k(2t+1)$ for some $k > 0$. Then in order for the inequality in \eqref{eq:bound_for_claim3.6} to hold, we need
\[
1-\frac{3}{k} \geq \frac{1}{k^2} \lb \frac{k(2t+1)}{2t} -2 \rb.
\]
or equivalently, we need $k^2 -k(4+1/2t)+2 \geq 0$. In particular, since $t \geq 1$, it suffices to have $k^2 - 4.5 k +2 \geq 0$. The left side of this inequality is a positive quadratic with roots at $k=0.5$ and $k=4$, so in particular, this holds whenever $k \geq 4$. Therefore, in particular, it suffices to have $h \geq 4(2t+1)$. 
\end{proof}
Note that we have {$h \geq 4t(2t+1) \geq 4(2t+1)$} and $p \leq 1/\ceil{h/(2t+1)}$. Therefore, by applying \cref{pclaim:long_cycles} to \eqref{eq:bound_in_terms_of_v_1}, we get 
\begin{equation*}
    \boldx(Q \cup \{v_1\}) > \frac{2\ceil{\frac{h}{2t+1}}-\ceil{\frac{h}{2t}} + \lb \ceil{\frac{h}{2t+1}}^2 -4\ceil{\frac{h}{2t+1}}+\ceil{\frac{h}{2t}}+1 \rb p}{\lb \ceil{\frac{h}{2t+1}}-1 \rb \lb t + \lb \ceil{\frac{h}{2t+1}}-t-1 \rb p \rb} + \boldx(v_1).
\end{equation*}
Combining this with the upper bound on $\boldx(Q \cup \{v_1\})$ given in \eqref{eq:upper_bound_on_qv1}, and rearranging, we get that
\begin{equation}\label{eq:intermediate_bound_on_p}
p > \frac{2\ceil{\frac{h}{2t+1}} -\ceil{\frac{h}{2t}}}{\ceil{\frac{h}{2t}}\ceil{\frac{h}{2t+1}} - \ceil{\frac{h}{2t+1}}^2 -2\ceil{\frac{h}{2t}} + 2 \ceil{\frac{h}{2t+1}}+1}.
\end{equation}
However, when {$h \geq 4t(2t+1)$}, the inequality above contradicts the assumption that $p \leq 1/\ceil{h/(2t+1)}$. For $t\geq 2$, we can show this by the fact that $m \leq \ceil{m} \leq m+1$. Indeed, since $p \leq 1/\ceil{h/(2t+1)}$, we know that the right side of the inequality~\eqref{eq:intermediate_bound_on_p} above must also be at most $1/\ceil{h/(2t+1)}$. Rearranging this and substituting the appropriate upper or lower bound for $m = \frac{h}{2t+1}$ or $m=\frac{h}{2t}$, and we obtain an inequality in terms of $h$ and $t$. Specifically, we show that if $p \leq 1/\ceil{h/(2t+1)}$ and also satisfies \eqref{eq:intermediate_bound_on_p}, then
\begin{equation}\label{eq:intermediate_bound_2}
\frac{2h^2(t-1)-2h(2t-1)(2t+1)^2-10(2t+1)^2t}{h^2+h(2t+1)(8t^2+4t-1)+8t(2t+1)^2}<0.
\end{equation}
As the denominator of this is always non-negative for $t \geq 1$, we require $2h^2(t-1)-2h(2t-1)(2t+1)^2-10(2t+1)^2t<0$. For $t \geq 2$, this is a positive quadratic with minimum at $h= (2t-1)(2t+1)^2/(2(t-1))$. For $t \geq 2$, this is smaller than $h=4t(2t+1)$, so for $h \geq 4t(2t+1)$, the function is increasing. By showing that the left side of \ref{eq:intermediate_bound_2} is positive at $h=4t(2t+1)$, we obtain the necessary contradiction. This does not work for $t=1$, however, since this crude upper and lower bound on $\ceil{m}$ is not quite good enough. Instead, we split into $3$ cases depending on the value of $h$ mod $3$ (that is, we replace $h$ by one of $3k$, $3k+1$, or $(3k+2)$) and then obtain a contradiction as before $p$ satisfying both $p \leq 1/\ceil{h/(2t+1)}$ and also \cref{eq:intermediate_bound_on_p}. 
We conclude that the black-vertex subgraph $K_B$ must indeed contain a grey cycle of length at least $\ceil{h/2t}$.
\end{proof}

\subsection{Using long cycles to find shorter cycles}

In this section, we prove two key lemmas. First, we show that given a graph satisfying a particular condition on the degree, we can use `long' cycles to find shorter cycles in the graph.

\begin{lemma}\label{lem:length_reduction}
Let $G$ be a graph, $m \geq 5$ a positive integer. Suppose that in every set of $\floor{m/3}$ vertices of $G$, there are some two which have a common neighbour. If $G$ contains a cycle of length at least $m$, then it also contains a cycle of length between~$\ceil{m/2}$~and~$m-1$.
\end{lemma}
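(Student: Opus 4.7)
The plan is to argue by contradiction. Suppose $G$ contains a cycle of length at least $m$ but no cycle of length in $[\ceil{m/2}, m-1]$, and let $C$ be a shortest cycle of $G$ of length $\ell \geq m$. Then by the minimality of $\ell$, combined with the contradiction hypothesis, $G$ contains no cycle whose length lies in the \emph{forbidden range} $[\ceil{m/2}, \ell - 1]$.

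The first sub-step I would establish is that $C$ has no chord. Given any chord $xy$ of $C$, let $a$ be the length of the shorter arc of $C$ between $x$ and $y$, so that $2 \leq a \leq \ell/2$. The chord splits $C$ into two new cycles of lengths $a + 1$ and $\ell - a + 1$; using $\ell \geq m \geq 5$ one checks that $\ell - a + 1 \in [\ceil{m/2}, \ell - 1]$, contradicting the forbidden range.

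Next, I would pick a set $S$ of $k = \floor{m/3}$ vertices on $C$ so that all consecutive cyclic gaps along $C$ are at least $3$; this is possible since $\ell \geq m \geq 3k$. A short verification (each gap is $\geq 3$, and the sum of the remaining $k - 1 \geq 1$ gaps is also $\geq 3$) shows that the shorter arc of $C$ between any two distinct vertices of $S$ has length at least $3$. By the hypothesis of the lemma, some pair $u, v \in S$ shares a common neighbour $w$ in $G$, and I would split into two cases. If $w \notin V(C)$, then the path $uwv$ together with the longer arc of $C$ between $v$ and $u$ yields a cycle of length $\ell - a + 2$, where $a \geq 3$ denotes the shorter arc; the inequalities $a \geq 3$ and $a \leq \ell/2$ (with $\ell \geq m$) give $\ell - a + 2 \in [\ceil{m/2}, \ell - 1]$, contradicting the forbidden range. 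If instead $w \in V(C)$, then $uw$ and $vw$ are both edges of $G$, and by the no-chord step both must be edges of $C$; but this forces $u$ and $v$ to be at arc-distance exactly $2$ on $C$, contradicting that their shorter arc is at least $3$.

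I expect the main obstacle to be calibrating the spread of $S$ so that the single threshold ``shorter arc $\geq 3$'' does all the work in both cases: in the off-cycle case it prevents the long cycle $\ell - a + 2$ from degenerating into (or past) $C$, so that its length stays in the forbidden range, while in the on-cycle case it precludes $w$ from being a common $C$-neighbour of $u$ and $v$. The shortest-cycle reduction is the structural glue that lets me enlarge the forbidden range from $[\ceil{m/2}, m-1]$ up to $[\ceil{m/2}, \ell - 1]$, which is exactly what makes the no-chord step succeed.
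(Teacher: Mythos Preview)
Your proof is correct and follows essentially the same strategy as the paper: take a shortest cycle $C$ of length at least $m$, select $\floor{m/3}$ vertices on $C$ at mutual arc-distance at least $3$, and use the common-neighbour hypothesis to produce a cycle in the forbidden range via an on-cycle/off-cycle case split. Your presentation is in fact slightly cleaner than the paper's: isolating the no-chord property of $C$ upfront reduces the on-cycle case to an immediate contradiction, and always pairing the path $uwv$ with the longer arc in the off-cycle case avoids the paper's sub-case split on the arc length $j-i$.
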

\begin{proof}
Let $C$ be a shortest cycle in $G$ among all cycles of length at least $m$. The idea of the proof is as follows. We carefully choose a set of $\floor{m/3}$ vertices of $C$, and our hypothesis then implies that two of these vertices share a common neighbour. We use this common neighbour to find a new cycle $C'$ which has length strictly shorter than $C$ but still has length at least $\ceil{m/2}$. By minimality of $C$, we must have that $C'$ has length at most $m-1$, thus proving the result. It remains only to choose the set of vertices, and show that we can always find a new cycle $C'$.

We begin by labelling the vertices of $C$ by $u_1, \dots, u_\ell$, so that $u_i u_{i+1}$ is an edge for each $i \in [\ell]$ (indices taken with addition modulo $\ell$). Let $M$ be the subset of vertices of $C$ given by 
\[
    M = \lcb u_{3i-2} \colon i \in \lcb 1, \dots, \floor{\frac{m}{3}} \rcb \rcb.
\]
Since $M$ contains exactly $\floor{m/3}$ vertices, we know by hypothesis that there exist two distinct vertices $u_i, u_j \in M$ which have a common neighbour $v$ in $G$. Without loss of generality, we may assume $i <j$. By choice of $M$, the paths $u_i \, C \, u_j$ and $u_j \, C \, u_i$ have length at least 3. Hence, the path $u_i \, v \, u_j$ requires at least one edge which does not belong to the cycle $C$.

We have two cases to consider. Recall that in each case, we aim to find a new cycle $C'$ such that $\ceil{m/2} \leq \size{C'} \leq \size{C}-1$.

\begin{enumerate}[label=\textbf{Case \arabic*:},wide,labelindent=0pt,parsep=0pt]
\item \textbf{$\boldsymbol{v}$ lies outside the cycle $\boldsymbol{C}$.} Now note that $3 \leq j-i \leq m-3$. If $j-i \geq \ceil{m/2}-2$ then let $C' = u_i \, C \, u_j \, v \, u_i$. Then $\size{C'} = (j-i)+2$. Hence,
\[
\ceil{\frac{m}{2}} = \ceil{\frac{m}{2}}-2 + 2 \leq \size{C'} \leq \size{C}-3+2 = \size{C}-1,
\]
as required. On the other hand, if $j-i \leq \ceil{m/2}-2$, then let $C' = u_i \, v \, u_j \, C \, u_i$. Then $\size{C'} = \size{C}-(j-i)+2 $ and
\[
\ceil{\frac{m}{2}} \leq  m -\ceil{\frac{m}{2}} + 2 + 2 \leq  \size{C'} \leq \size{C}-1,
\]
as required. 

\item \textbf{$\boldsymbol{v}$ lies on the cycle $\boldsymbol{C}$.} Since both the paths $u_i \, C \, u_j$ and $u_j \, C \, u_i$ have length at least 3, and the vertex $v$ must lie on one of these paths, we may assume without loss of generality that both of the paths $P_1 = u_i \, C \, v$ and $P_2 = v \, C \, u_i$ have length at length at least 2. In particular, the edge $u_i \, v$ is a chord on $C$. Thus, either $P_1$ or $P_2$ have length at least $\ceil{\size{C}/2}$, say this is $P_1$ without loss of generality, and let $C' = P_1 \, u_i$. Then 
\[
    \ceil{\frac{m}{2}} \leq \frac{\size{C}}{2}+1 \leq \size{C'} \leq \size{C}-1,
\]
as required.\qedhere
\end{enumerate}
\end{proof}

The following lemma gives a condition on the common neighbourhoods of vertices in a weighted graph $K$. For each vertex $v \in V(K)$, let $N_G(v)$ be the set of grey neighbours of $v$ in $V(K)$, that is,
\[
N_G(v) = \{u \in V(K) \colon uv \in \EG\}.
\]

\begin{lemma}\label{lem:any_set_has_pair_with_intersecting_neighbourhood}
Let $K$ be a CRG with all black vertices and all edges grey or white. Let $m \geq 2$ be an integer. If $d_G(v) >1/m$ for all $v \in V(K)$, then in each subset $M \subseteq V(K)$ of size at least $m$ there exist at least 2 vertices which have a common neighbour in their grey neighbourhood.
\end{lemma}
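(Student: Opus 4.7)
The plan is to argue by contradiction using a short counting argument on the optimal weight vector $\boldx$ of the quadratic program $g_K(p)$. Suppose there exists a subset $M \subseteq V(K)$ with $\size{M} \geq m$ such that no two distinct vertices of $M$ share a common grey neighbour. Unpacking the definition of $N_G$, this says precisely that the grey neighbourhoods $\{N_G(v) : v \in M\}$ are pairwise disjoint subsets of $V(K)$: indeed, any $z \in N_G(u) \cap N_G(w)$ with $u \neq w$ in $M$ would be, by definition, a common grey neighbour of $u$ and $w$, contrary to assumption.

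Next I would combine this disjointness with the weighted-degree hypothesis. By the definition of $d_G$, we have $d_G(v) = \boldx(N_G(v))$ for each $v$, so the pairwise disjointness of the sets $N_G(v)$ yields
\[
    \sum_{v \in M} d_G(v) \;=\; \boldx\Bigl(\bigcup_{v \in M} N_G(v)\Bigr) \;\leq\; \boldx(V(K)) \;=\; 1,
\]
where the final equality is just the feasibility constraint $\boldx \cdot \mathbf{1} = 1$ from the quadratic program~\eqref{eq:g_k_p}. On the other hand, the hypothesis that $d_G(v) > 1/m$ for every $v \in V(K)$, together with $\size{M} \geq m$, immediately gives $\sum_{v \in M} d_G(v) > m \cdot (1/m) = 1$, a direct contradiction.

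There is essentially no obstacle here: the proof reduces to a one-line pigeonhole computation and uses nothing beyond the feasibility of $\boldx$ and the definition of $d_G$. In particular, the assumptions that every vertex of $K$ is black and that every edge is grey or white are not used in the counting step itself; they are imposed only so that the resulting lemma slots cleanly into the downstream application, where $K$ will play the role of $K_B$ in a candidate CRG, and the conclusion will be combined with \cref{lem:length_reduction} to turn the long grey cycle produced by \cref{plem:long_cycle_exists} into one of controlled intermediate length.
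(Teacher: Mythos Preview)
Your argument is correct and matches the paper's proof essentially line for line: assume the grey neighbourhoods of the vertices in $M$ are pairwise disjoint, then use $\sum_{v\in M} d_G(v) \leq \boldx(V(K)) = 1$ together with the hypothesis $d_G(v) > 1/m$ to derive the contradiction $1 < 1$. Your additional remark that the black-vertex and grey/white-edge hypotheses are not actually needed for the counting is also accurate.
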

\begin{proof}
Consider a set $M \subseteq V(K)$ with $\size{M} \geq m$. Suppose that the neighbourhoods of vertices in $M$ are pairwise-disjoint. Then we have
\[
1 \geq \sum_{w \in \bigcup_{v \in M} N_G(v)} \boldx(w) = \sum_{v \in M} \sum_{w \in N_G(v)} \boldx(w) = \sum_{v \in M} {d_G(v)} > 1,
\]
a contradiction. Here, the first inequality holds since the sum of weights of all vertices in $K$ is exactly 1, the second equality holds by the assumption that every pair of vertices have a disjoint neighbourhood and the final inequality holds because each term of the sum has size greater than $1/m$ by assumption.
\end{proof}

\subsection{Proof of Theorem~\ref{thm:main_result}}

It remains only to combine the lemmas we have seen in this section to prove our main result. 

\begin{proof}[Proof of \cref{thm:main_result}]
Let {$t \geq 1$ and $h \geq 4t(2t+1)$}. Recall that $c_0  = \floor{(\floor{h/t}+1)/3}$, $\ell_0 = \ceil{h/(2t+1)}$, and $p_0 = t/(c_0 \ell_0 -c_0 - \ell_0 +t+1)$, with $p \in [p_0, 1/\ceil{h/(2t+1)}]$.

Assume for a contradiction that there exists a $p$-core candidate CRG $K$. Suppose that $K$ has $r$ white vertices. By \cref{lem:white_vxs}, we $r = t-1$.
By \cref{plem:long_cycle_exists}, we know that $K_B$ contains a grey cycle of length at least $\ceil{h/(2t)}$. By \cref{lem:forbidden_cycles}, $K_B$ cannot contain a grey cycle of length $\ell$ for any $\ell \in \{ \ceil{h/(2t)}, \ldots, \floor{h/t} \}$. Combining these facts together, we may assume that $K_B$ contains a grey cycle of length at least $\floor{h/t}+1$. 

By definition we have $\boldx(\VB) \leq 1$. Furthermore, by \cref{pprop:black_vertex_properties}~\ref{pitem:black_vertex_properties_black_weighted_degree} as there are exactly $t-1$ white vertices, we know that for each vertex $v \in \VB$,
\begin{equation}\label{eq:weight_of_black_neighbourhood_in_grey}
    d_G^B(v) \geq \frac{\lb \ceil{\frac{h}{2t+1}}-1 \rb p}{t+ \lb \ceil{\frac{h}{2t+1}}-t-1 \rb p} + \frac{1-2p}{p}\boldx(v).
\end{equation}

\begin{claim}\label{claim:where_we_use_p_0}
    When $p \geq p_0$, we have that $d_G^B(v) > 1/c_0$.
\end{claim}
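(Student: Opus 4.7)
The plan is to unpack the explicit formula for $d_G^B(v)$ given by \cref{pprop:black_vertex_properties}~\ref{pitem:black_vertex_properties_black_weighted_degree}. Since $K$ has exactly $r = t-1$ white vertices by \cref{lem:white_vxs}, the ``constant'' term $t-r-1$ in the numerator vanishes, leaving
\[
d_G^B(v) = \frac{(\ell_0 - 1)p}{t + (\ell_0 - t - 1)p} + \frac{t\eps}{p} + \frac{1-2p}{p}\boldx(v).
\]
The second and third summands are nonnegative (in fact strictly positive, as I will note below), so it suffices to show that the leading term is at least $1/c_0$ whenever $p \geq p_0$, and then recover strictness from the remaining contributions.

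The key observation is that $p_0$ was designed precisely to make the leading term equal to $1/c_0$. Cross-multiplying, the inequality $\frac{(\ell_0 - 1)p}{t + (\ell_0 - t - 1)p} \geq \frac{1}{c_0}$ is equivalent to
\[
p\bigl[c_0(\ell_0 - 1) - (\ell_0 - t - 1)\bigr] \geq t,
\]
i.e.\ $p \bigl[c_0 \ell_0 - c_0 - \ell_0 + t + 1\bigr] \geq t$, which is exactly $p \geq p_0$ by the definition $p_0 = t/(c_0 \ell_0 - c_0 - \ell_0 + t + 1)$. Equivalently, one can check directly that the function $f(p) := (\ell_0-1)p/(t + (\ell_0-t-1)p)$ is monotonically increasing in $p$ (its derivative is proportional to $t(\ell_0 - 1) > 0$) and that $f(p_0) = 1/c_0$ after cancellation.

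For the strict inequality, even in the boundary case $p = p_0$ where the leading term equals $1/c_0$ exactly, strictness comes from $\frac{1-2p}{p}\boldx(v) > 0$: by hypothesis $p \leq 1/\ell_0 \leq 1/2$ (which uses the standing assumption $h \geq 2(2t+1)$ noted after \cref{lem:forbidden_cycles}), so $1 - 2p > 0$; and since $K$ is $p$-core, the optimal weight vector has no zero entries, so $\boldx(v) > 0$. (Alternatively, the $t\eps/p$ term is already strictly positive because $K$ is a candidate CRG, so $\eps > 0$.) I do not expect any substantive obstacle: the entire claim amounts to recognising that $p_0$ has been defined so that the cross-multiplied inequality becomes tight at $p = p_0$, and monotonicity together with the positivity of the remaining terms handles everything else.
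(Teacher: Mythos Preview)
Your proof is correct and follows essentially the same approach as the paper: both reduce to showing that the leading term $\frac{(\ell_0-1)p}{t+(\ell_0-t-1)p}$ is at least $1/c_0$ precisely when $p \geq p_0$, then invoke \eqref{eq:weight_of_black_neighbourhood_in_grey}. Your treatment of the boundary case $p = p_0$ (recovering strictness from the $t\eps/p$ or $\frac{1-2p}{p}\boldx(v)$ term) is in fact cleaner than the paper's, which glosses over this point; conversely, the paper spends some effort verifying that the denominator $c_0\ell_0 - c_0 - \ell_0 + t + 1$ is positive, which you reasonably take as implicit in the well-definedness of $p_0$ from the theorem statement.
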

\begin{proof}[Proof of \cref{claim:where_we_use_p_0}]
    It suffices to show that when $p \geq p_0$,
    \begin{equation*}
        \frac{\lb \ceil{\frac{h}{2t+1}}-1 \rb p}{t+ \lb \ceil{\frac{h}{2t+1}}-t-1 \rb p} > \frac{1}{c_0},
    \end{equation*}
    as by \eqref{eq:weight_of_black_neighbourhood_in_grey} this implies the claim. This is equivalent to showing
    \begin{equation}\label{eq:equivalent_to_claim}
        \lb \lb \ceil{\frac{h}{2t+1}}-1 \rb c_0  -  \ceil{\frac{h}{2t+1}}+t+1 \rb p > t.
    \end{equation}
    Now note that 
    \begin{align*}
        \lb \ceil{\frac{h}{2t+1}}-1 \rb \floor{\frac{\floor{\frac{h}{t}}+1 }{3}}  -  \ceil{\frac{h}{2t+1}}+t+1 
        &\geq \lb {\frac{h}{2t+1}}-1 \rb \lb {\frac{h}{3t}} - 1 \rb  -  {\frac{h}{2t+1}}-1+t+1\\
        &= \frac{h^2}{3t(2t+1)} - \frac{h}{3t} -\frac{h}{2t+1} +1 +t \\
        &= \frac{h^2 - h(5t+1)+3t(t+1)(2t+1)}{3t(2t+1)}.
    \end{align*}
    This is positive whenever $h^2 - h(5t+1)+3t(t+1)(2t+1) \geq 0$. Indeed, this has roots at
    \begin{equation*}
        t_0^\pm = \frac{(5t+1) \pm \sqrt{(5t-1)^2-12t(t+1)(2t+1)}}{2}.
    \end{equation*}
    For $t \geq 4$, the quantity $(5t-1)^2-12t(t+1)(2t+1)$ is negative, and so this is a positive quadratic with no roots, and so is always positive. On the other hand, for $t = 1$, this is a positive quadratic with larger root between $5$ and $5.5$, and so for $h \geq 4t(2t+1) \geq 6$, the equation is positive. For $t = 2$, this has larger root between $8$ and $8.5$ and so for $h \geq 4t(2t+1) \geq 8$, the equation is positive. Finally, for $t=3$, this has larger root at $9$, and therefore, for $h \geq 4t(2t+1) \geq 9$, the equation is always positive. 

    In particular, when $t \geq 1$ and $h \geq 4t(2t+1)$, the inequality $h^2 - h(5t+1)+3t(t+1)(2t+1) \geq 0$ holds. Therefore, we can rearrange \eqref{eq:equivalent_to_claim} and get that
    \begin{equation}
        p \geq \frac{t}{\lb \ceil{\frac{h}{2t+1}}-1 \rb \floor{\frac{\floor{\frac{h}{t}}+1 }{3}}  -  \ceil{\frac{h}{2t+1}}+t+1 } = p_0.
    \end{equation}
\end{proof}

So by \cref{claim:where_we_use_p_0}, since $p \geq p_0$, each vertex $v \in \VB(K)$ satisfies $d_G^B(v) > \boldx(\VB)/ \ceil{(\ceil{h/t}+1)/3}$. Therefore by \cref{lem:any_set_has_pair_with_intersecting_neighbourhood}, in each set of $\ceil{\frac{\floor{h/t}+1}{3}}$ vertices of $K_B$, there are some two which have a common grey neighbour in $K_B$. Thus, we can apply \cref{lem:length_reduction} to show that $K_B$ must contain a grey cycle of length $\ell$ for some $\ell \in \{ \ceil{(\floor{h/t}+1)/2}, \dots, \floor{h/t}\}$. Note that $\ceil{(\floor{h/t}+1)/2} \geq \ceil{h/(2t)}$. Indeed, if $h = x + y t$ for $x \in \{0, \ldots, t-1\}$, then
\begin{align*}
    \ceil{\frac{\floor{\frac{h}{t}}+1}{2}}-\ceil{\frac{h}{2t}} = \ceil{\frac{\floor{\frac{x+yt}{t}}+1}{2}}-\ceil{\frac{x+yt}{2t}} = \ceil{\frac{y}{2}+\frac{1}{2}}-\ceil{\frac{y}{2} + \frac{x}{2t}} \geq 0.
\end{align*}
Here, the second equality holds because of the definition of the floor function and the final inequality holds because $x/t < 1$. Thus, we have found a cycle of length in the range forbidden by \cref{lem:forbidden_cycles}, giving a contradiction to the assumption that such a $K$ exists, and therefore, $\ed_{\Forb(C_h^t)}(p) = \gamma_{\Forb(C_h^t)}(p)$ for $p \in [p_0, 1/\ceil{h/(2t+1)}]$, concluding the proof.
\end{proof}

\subsection{Proof of \cref{thm:main_result_C_10_12}}

In order to prove \cref{thm:main_result_C_10_12}, we first need to introduce the following definition. Let $K$ be a CRG. Then for any vertices $u, v \in V(K)$, we say that $\dist_G(u,v)$ is the length of the shortest grey-edge path between $u$ and $v$. For any $i \in \N$, we define $N^i_G(v)= \{u \in V(K) \colon \dist_G(u,v) \leq i\}$.

\begin{proof}
    Let $h \in \{10, 12\}$ and let $p_{10} = 1/7$ and $p_{12} = 1/10$. Let $p \in (0, p_h)$ and suppose there is a $p$-core candidate CRG $K$ for $\Forb(C_h)$. We will find a contradiction to show that such a CRG cannot exist, which therefore implies that $\ed_{\Forb(C_h)}(p) = \gamma_{\Forb(C_h)}(p) = \frac{p(1-p)}{1+(\ceil{h/3}-2)p}$. By \cref{thm:characterisation_of_p_core_crgs}, we may assume that all vertices of $K$ are black, and all edges are either grey or white. Thus whenever it is not explicitly stated in this proof, we consider grey edges to be edges and white edges to be non-edges. By \cref{plem:long_cycle_exists}, we know that $K$ contains a grey cycle of length at least $h/2$. Let $C$ be the shortest grey cycle of length at least $h/2$ in $K$. Then by \cref{lem:forbidden_cycles}, we know that $K$ cannot contain a grey cycle which has length in the range $\{h/2, \ldots, h\}$. Therefore, we may assume that $C$ has length at least $h+1$. Let $u$ and $u'$ be (as close as possible to) diametrically opposite vertices on $C$ (that is, $u C u'$ and $u' C u$ both have length at least $\floor{\size{C}/2}$). Note in particular that $\floor{\size{C}/2} \geq h/2$. 
    
    Suppose first that $N_G^2(u) \cap N_G^2(u') \neq \emptyset$, that is, there is some $v$ which lies in $N_G^2(u) \cap N_G^2(u')$. Note that no neighbour of $u$ (other than the vertices adjacent to $u$ on $C$) can lie on $C$, and similarly for $u'$. If this were the case, we would find a chord in $C$, which would give a shorter cycle of length at least $\floor{\size{C}/2} \geq h/2$, a contradiction. Similarly, if the vertex $v$ lies on the cycle, then it would lie on some path between $u$ and $u'$ on the cycle. Then the path from $u$ to $v$ outside the cycle has length $2$ and the path from $u'$ to $v$ outside the cycle has length $2$. Combining these paths with the path from $u$ to $u'$ on the cycle which does not include $v$, we would get a cycle which is strictly shorter than $C$ but has length at least $h/2$. Therefore, $v$ also lies outside the cycle. This would give a path of length either $2$ or $4$ between $u$ and $u'$ outside the cycle. Taking this together with a path from $u$ to $u'$ on $C$ again gives a shorter cycle than $C$ which still has length at least $h/2$, a contradiction. Thus, we may assume that $N_G^2(u) \cap N_G^2(u') = \emptyset$. We know that $\boldx(N^2_G(u)) + \boldx(N^2_G(u')) \leq 1$. Thus, without loss of generality, assume that $\boldx(N^2_G(u)) \leq 1/2$. 

    Now for any vertex $w \in N^2(u)$, we say that the multiplicity of $w$ is the number of vertices $v \in N_G(u)$ such that $w \in N_G(v)$. By \cref{pprop:black_vertex_properties}, we know that for any $v \in V(K)$, $d_G(v) \geq \frac{3p}{1+2p}+\frac{1-2p}{p} \boldx(v)$, and that $\boldx(v) \leq \frac{p}{1+2p}$. Thus, in particular, $\size{N_G(v)} \geq 3$. The remainder of the proof differs depending on whether \ref{item:main_result_h_10} $h=10$, or \ref{item:main_result_h_12} $h=12$.

    {For \ref{item:main_result_h_10}:} Note that $K$ cannot contain cycles of length in the range $\{5,\ldots, 10\}$. Suppose that there is some vertex $v \in N_G(u)$ such that there are 2 vertices $w, w' \in N_G(v)\setminus \{u\}$ which have multiplicity at least 3. Note that this implies there exists some $v' \in N_G(u)$ such that $v' w \in \EG(K)$. If there is some $v'' \in N_G(u) \setminus \{v, v', w\}$ such that $w' v'' \in \EG(K)$, then $u v' w v w' v'' u$ is a cycle of length $6$, a contradiction. Therefore, there can be no such $v''$, and so $ w \in N_G(u)$ and $v', w \in N_G(w')$. But then $u v w w' v' u$ is a cycle of length $5$, a contradiction. Therefore, for each vertex $v \in N_G(u)$, there can be at most one vertex $b(v)$ which has multiplicity at least $3$. Then
      \begin{align*}
        \sum_{v \in N_G(u)} \sum_{w \in N_G(v) \setminus \{u, b(v)\}} \boldx(w) &= \sum_{v\in N_G(u)} \lb \lb \sum_{w \in N_G(v)} \boldx(w) \rb - \boldx(u)\rb = \sum_{v \in N_G(u)} \lb d_G(v) - \boldx(u) - \boldx(b(v)) \rb\\
        &> \sum_{v\in N_G(u)} \lb \frac{3p}{1+2p} + \frac{1-2p}{p} \boldx(v) - \boldx(u) - \boldx(b(v))\rb \\
        &\geq \sum_{v \in N_G(u)} \frac{p}{1+2p} + \frac{1-2p}{p} \boldx(v)\\
        &> \frac{p}{1+2p} \size{N_G(u)} + \frac{1-2p}{p}\lb \frac{3p}{1+2p} + \frac{1-2p}{p} \boldx(u) \rb \\
        & \geq \frac{3-3p}{1+2p} + \frac{(1-2p)^2}{p^2} \boldx(u) \geq \frac{3-\frac{3}{7}}{1+\frac{2}{7}}=\frac{18}{9}=2.
    \end{align*}
Here, the second equality comes from the definition of $d_G(v)$, the next inequality by the lower bound on $d_G(v)$, the next by using the upper bound on the weight of a vertex, the next by the bound on $d_G(u)$ and the next by the fact that $\size{N_G(u)} \geq 3$. However, for each vertex $w$ which has a path of length exactly 2 to $u$, the number of times $\boldx(w)$ is counted in this sum is exactly the multiplicity of $w$, and therefore in particular, as we are ignoring any vertices with multiplicity at least $3$, each vertex in $N^2_G(u)$ can appear at most twice in this sum. Thus, 
\[
\sum_{v \in N_G(u)} \sum_{w \in N_G(v) \setminus \{u, b(v)\}} \boldx(w) \leq 2 \cdot \boldx(N_G^2(u)) \leq 1,
\]
a contradiction. Therefore, $K$ cannot exist. 

For \ref{item:main_result_h_12}: The proof follows similarly. Note that $K$ cannot contain any cycle of length in the range $\{6, \ldots, 12\}$. Suppose there is some vertex $v \in N_G(u)$ such that there are two vertices $w$ and $w'$ in $N_G(v)$ with multiplicity at least $4$. Then in particular, there are two vertices $v'$ and $v''$ in $N_G(u)$ such that $v' w$ and $v'' w' \in \EG(K)$. Then $u v' w v w' v'' u$ gives a cycle of length $6$, a contradiction. Therefore, for each vertex $v \in N_G(u)$, there can be at most one vertex $b'(v)$ which has multiplicity at least $4$. Then by the same calculation as before with $p \leq p_{12} = 1/10$,
\begin{align*}
        \sum_{v \in N_G(u)} \sum_{w \in N_G(v) \setminus \{u, b'(v)\}} \boldx(w) > \frac{3-3p}{1+2p} + \frac{(1-2p)^2}{p^2} \boldx(u) \geq \frac{3-\frac{3}{10}}{1+\frac{2}{10}}=\frac{27}{12} > 2.
    \end{align*}
However, by the same reasoning as before, each vertex in $N^2_G(u)$ can only appear at most 3 times in the sum above, and so
\[
\sum_{v \in N_G(u)} \sum_{w \in N_G(v) \setminus \{u, b'(v)\}} \boldx(w) \leq 3 \cdot \boldx(N_G^2(u)) \leq 3/2,
\]
a contradiction. Thus again, such a CRG $K$ cannot exist.
\end{proof}
\section{Concluding remarks}\label{sec:conclusion}

We conclude by discussing what needs to be done to determine the value of $\ed_{\Forb(C_h^t)}(p)$ for $p \in [0,p_0]$. For notational simplicity we discuss this only for the case $t=1$ (i.e. for cycles), but we expect that any successful method for $t=1$ would extend to the cases $t\geq 2$, and improvements for $t=1$ would be interesting in their own right.

For $t=1$, consider the following setup.
\begin{setup}\label{setup:t_equals_1}
Let $2 \mid h$, let $n \geq h$ and let $p \in (0,p_0]$. Suppose $G$ is a graph on $n$ vertices, and $\boldx \colon V(G) \rightarrow [0,1]$ is such that $\sum_{v \in V(G)} \boldx(v) =1$. Suppose further that for any $v \in V(G)$, 
\[
\sum_{u \in N(v)} \boldx(u) \geq \frac{\lb\ceil{\frac{h}{3}}-1\rb p}{1+\lb \ceil{\frac{h}{3}} -2 \rb p} + \frac{1-2p}{p} \boldx(v).
\]
\end{setup}
Suppose under \cref{setup:t_equals_1} we can prove that $G$ must contain a cycle with length in the range $\{h/2, \ldots, h\}$. Then this would prove that $\ed_{\Forb(C_h)}(p) = \gamma_{\Forb(C_h)}(p)$ for all $p \in (0,p_0]$. On the other hand, suppose that there is some way to construct $G$ satisfying \cref{setup:t_equals_1} without containing any cycles in the range $\{h/2, \ldots, h\}$. Then this would correspond to a CRG $K$ with vertex set $\VB(K) = V(G)$ and $\VW(K) = \emptyset$ such that 
\begin{itemize}
    \item $\EW(K) = \emptyset$,
    \item $u v \in \EB(K)$ if and only if $u v \notin E(G)$, and
    \item $u v \in \EG(K)$ if and only if $u v \in E(G)$.
\end{itemize}
Furthermore, this CRG would satisfy that $g_K(p) < \gamma_{\Forb(C_h)}(p)$, and therefore would give another upper bound on the value of the edit distance function for $p \in (0, p_0]$. The authors would conjecture that the first of these holds, that is, that $\ed_{\Forb(C_h)}(p) = \gamma_{\Forb(C_h)}(p)$ for all $p \in (0,p_0]$, and any further improvements within this range would be interesting.

\medskip
\bibliographystyle{abbrv}

\end{document}